\newtheorem{theorem}{Theorem}
\newtheorem{claim}[theorem]{Claim}
\newtheorem*{theorem*}{Theorem}
\newtheorem*{claim*}{Claim}
\newtheorem*{remark*}{Remark}
\newtheorem*{lemma*}{Lemma}
\newtheorem{lemma}[theorem]{Lemma}
\renewcommand{\mod}{\mathop {\mathsf{mod}}}
\newcommand{\Z}{{\mathbb Z}}
\newcommand{\calX}{{\mathcal{X}}}
\newcommand{\N}{\mathbb{N}}
\newcommand{\eps}{\epsilon}
\newcommand{\R}{\mathbb{R}}
\newcommand{\F}{\mathbb{F}}
\newcommand{\E}{\mathop \mathbb{E}}
\newcommand{\nbits}{\{\pm 1\}}
\newcommand{\ip}[2]{\left\langle #1,#2\right\rangle}
\newcommand{\Expect}[1]{\mathop{\mathbb{E}}\left
	[ #1 \right ]}
\newcommand{\Ex}[2]{\mathop{\mathbb{E}}\displaylimits_{#1}\left
	[ #2 \right ]}
\title{Anti-concentration
in most directions}
\newcommand{\an}[1]{\textcolor{blue}{#1}}
\begin{document}

\author{Anup Rao}
\address{School of Computer Science, University of Washington}
\email{anuprao@cs.washington.edu}
\author{Amir Yehudayoff}
\address{Department of Mathematics, Technion-IIT}
\email{amir.yehudayoff@gmail.com}
\thanks{A.R.\ is supported by the National Science Foundation under agreement CCF-1420268. A.Y.\ is partially supported by ISF grant 1162/15.
This work was done while
the authors were visiting the Simons
Institute for the Theory of Computing.}

\begin{abstract}
We prove anti-concentration bounds 
for the inner product of two independent random vectors. 
For example, we show that if $A,B$ are subsets of the cube
$\{\pm 1\}^n$ with $|A| \cdot |B| \geq 2^{1.01 n}$, and $X \in A$ and $Y \in B$ are sampled independently and uniformly, 
then the inner product $\ip{X}{Y}$ takes on any fixed value with probability at most $O(\tfrac{1}{\sqrt{n}})$. 
Extending Hal{\'a}sz work,
we prove stronger bounds when the choices for $x$ are unstructured.
We also describe applications to communication complexity,
randomness extraction and additive combinatorics.
\end{abstract}

\maketitle

\section{Introduction}
Anti-concentration bounds establish that the distribution of outcomes of a random process is 
not concentrated in any small region. No single outcome is obtained too often.
Anti-concentration plays an important role in 
mathematics and computer
science. It is used in the  study of roots of random  polynomials~\cite{littlewood1943number},
 random matrix theory~\cite{kahn,tao2009inverse},
 communication complexity~\cite{chakrabarti2012optimal,vidick2012concentration,sherstov2012communication},
 quantum computation~\cite{Aaronson_2011},
and more.

A well-known example is the sum of independent identically distributed random
variables. If $Y \in \nbits^n$ is uniformly distributed, then 
the probability that $\sum_{j=1}^n Y_j$ takes any specific value
is at most $\binom{n}{\lceil n/2 \rceil}/2^n = O(\tfrac{1}{\sqrt{n}})$. 

This was studied and generalized 
by Littlewood and Offord~\cite{littlewood1943number},
Erd\"os~\cite{erdos1945lemma}, 
and many others. The classical Littlewood-Offord problem is about understanding the anti-concentration of the inner product $\ip{x}{Y} = \sum_{j=1}^n x_j Y_j$, for arbitrary $x \in \R^n$ and $Y \in \nbits^n$ chosen uniformly.
Various generalizations were 
studied by Frankl and Furedi~\cite{frankl1988solution}, 
Hal{\'a}sz~\cite{halasz1977estimates}
and others.

It is interesting to understand the most general conditions under which anti-concentration holds  (see~\cite{tao2010sharp}
and references within).
Anti-concentration certainly fails when the 
entries of $Y$ are not independent.
We can, for example, sample $Y$ uniformly from the set of strings with exactly $\lceil n/2 \rceil$ entries that are $1$. Then $\sum_j Y_j$ is always the same, yet $Y$ has almost full entropy.

Can we somehow recover anti-concentration? 
A natural setting is to consider the inner-product $\ip{X}{Y}$ of two independent variables. 
This indeed recovers anti-concentration,
as the following theorem shows.

\begin{theorem*}[Chakrabarti and Regev~\cite{chakrabarti2012optimal}]
	There is a constant $c >0$ such that if $A,B \subseteq \nbits^n$ are each of size at least $2^{(1-c)n}$ and $X \in A, Y \in B$ are sampled uniformly and independently, then 
	$$\Pr [|\ip{X}{Y}| \leq c \sqrt{n}  ] \leq 1-c.$$
\end{theorem*}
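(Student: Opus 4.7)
I would try to prove the contrapositive by contradiction: assume $\Pr[|\ip{X}{Y}|\le c\sqrt n]>1-c$ and aim to derive a constraint inconsistent with $|A|,|B|\ge 2^{(1-c)n}$ when $c$ is a sufficiently small absolute constant.

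The first move is symmetrization. Let $X,X'$ be i.i.d.\ uniform on $A$ and $Y,Y'$ i.i.d.\ uniform on $B$, all four mutually independent, and set $W=\ip{X-X'}{Y-Y'}$. By a union bound $|W|\le 4c\sqrt n$ with probability at least $1-4c$. A direct calculation gives $\E[W^2]=4\operatorname{tr}(\Sigma_X\Sigma_Y)$, where $\Sigma_X$ and $\Sigma_Y$ are the covariance matrices of the uniform distributions on $A$ and $B$. The entropy bound $H(X)\ge(1-c)n$, via the coordinate-wise inequality $h(p)\le 1-\frac{(1-2p)^2}{2\ln 2}$, gives $\|\mu_X\|_2^2=\sum_j\E[X_j\mid A]^2\le O(cn)$, and hence $\operatorname{tr}(\Sigma_X)=n-\|\mu_X\|_2^2\ge(1-O(c))n$; the same holds for $\Sigma_Y$.

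The technical core is to show $\operatorname{tr}(\Sigma_X\Sigma_Y)=\Omega(n)$. This does not follow from trace bounds alone, since two PSD matrices of large trace may have vanishing Hilbert--Schmidt inner product if their eigenvectors live in orthogonal subspaces. I would instead argue that any covariance matrix of a uniform measure on a large subset of the cube has most of its spectrum near $1$: the diagonal is bounded by $1$, the trace is $\ge(1-O(c))n$, and the off-diagonal structure is constrained by level-$2$ Walsh--Fourier bounds for $1_A$ combined with Parseval. This should show that both $\Sigma_X$ and $\Sigma_Y$ are close to the identity on a subspace of codimension $O(cn)$, so the two subspaces must overlap in dimension $(1-O(c))n$, forcing $\operatorname{tr}(\Sigma_X\Sigma_Y)\ge\Omega(n)$.

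To close, I would apply Paley--Zygmund to $W^2$: if additionally $\E[W^4]=O(n^2)$, then $\Pr\bigl[|W|\ge\tfrac12\sqrt{\E[W^2]}\bigr]=\Omega(1)$, which gives $|W|=\Omega(\sqrt n)$ with constant probability, contradicting the tail bound $|W|\le 4c\sqrt n$ w.p.\ $1-4c$ once $c$ is a small enough absolute constant. The two genuine obstacles are the spectral lower bound $\operatorname{tr}(\Sigma_X\Sigma_Y)=\Omega(n)$ and the fourth-moment estimate $\E[W^4]=O(n^2)$ for restricted measures, the latter plausibly available via hypercontractivity applied to indicators of dense subsets of the cube. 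The spectral bound is the deeper point: it is where the hypothesis $|A|,|B|\ge 2^{(1-c)n}$ is genuinely invoked to rule out adversarial alignments of the two covariance structures.
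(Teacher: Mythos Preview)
The paper does not give a direct proof of this statement; it is quoted from~\cite{chakrabarti2012optimal} as background. What the paper proves is the sharper point-wise estimate of Theorem~\ref{thm:main}, from which the quoted theorem follows immediately (a bound of $C/\sqrt n$ on every atom, summed over the $O(c\sqrt n)$ integers in $[-c\sqrt n,c\sqrt n]$, gives $\Pr[|\langle x,Y\rangle|\le c\sqrt n]\le O(c)$ for every good direction $x$). The paper's method is nothing like yours: it bounds the characteristic function $\E_Y[\exp(2\pi i\theta\langle x,Y\rangle)]$ coordinate by coordinate (Lemma~\ref{lemma:tech}) and then counts the bad directions by an explicit encoding (Lemmas~\ref{lemma:encoding1} and~\ref{lemma:encoding2}). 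There are no covariance matrices, no Paley--Zygmund, no fourth moments; and conversely your scheme, even if it could be completed, would yield only the interval statement and not the point-wise bound the paper is after.

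Your outline is coherent and you have correctly isolated the two hard steps, but the mechanisms you propose for them do not work as stated. For the spectral lower bound: Parseval on $1_A$ gives only $\sum_{i<j}\E[X_iX_j\mid A]^2\le 2^n/|A|\le 2^{cn}$, an exponential rather than linear bound, so ``level-$2$ Fourier plus Parseval'' cannot force $\Sigma_X$ close to the identity in any useful sense. Indeed $\Sigma_X$ can be far from $I$: for $A=\{x:x_1=\cdots=x_k\}$ with $k\approx cn$ one has $\lambda_{\max}(\Sigma_X)=k$ and $\|\Sigma_X-I\|_F\approx cn$. The codimension statement you actually need is precisely the nontrivial content of the theorem; in the existing proofs this is the ``hard core'' step of~\cite{chakrabarti2012optimal,sherstov2012communication} or the matrix lemma of~\cite{vidick2012concentration}, and it requires an idea you have not supplied. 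For the fourth moment: hypercontractivity bounds $L^p$ norms of low-degree polynomials under the \emph{product} measure on the cube, not moments of $\langle U,V\rangle$ when $V$ is uniform on an arbitrary dense subset; passing through the density $1_{A\times A\times B\times B}$ costs a factor $2^{O(cn)}$, which swamps the gain. A single degree-four parity constraint on $B$ already makes $\E[V_iV_jV_kV_l]\ne 0$ for distinct indices, so the off-diagonal tensor structure you would need to control is genuinely present. You have located the right pressure points, but neither proposed tool reaches them.
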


The theorem shows that $\ip{X}{Y}$
does not land in an interval of length 
much smaller than $\sqrt{n}$ with high probability. 
When studying anti-concentration, however,
what we are ultimately interested in is 
proving point-wise estimates.
We would like to control the
{\em concentration probability} 
$$\max_{k \in \Z}  \ \Pr[ \ip{X}{Y} = k] ;$$
see~\cite{tao2009inverse}
and references within.

Here we prove
a sharp bound on the concentration probability.
The bound holds for an overwhelming majority of 
directions $x$.

\begin{theorem}\label{thm:main}
	For every $\beta > 0$  and 
	$\delta > 0$,
	there exists $C>0$ such that the following holds. If $B \subseteq \nbits^n$ is of size $2^{\beta n}$, then for all but $2^{n(1-\beta +\delta)}$ directions $x \in \nbits^n$, 
	$$\max_{k \in \Z} \Pr_Y[\ip{x}{Y}=k] \leq \tfrac{C}{\sqrt{n}}.$$ 
\end{theorem}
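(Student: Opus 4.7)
The plan is a moment method combined with a multi-dimensional Littlewood-Offord / Hal\'asz inequality. Let $T = \{x \in \nbits^n : \max_k \Pr_Y[\ip{x}{Y} = k] > C/\sqrt{n}\}$ be the bad set, and for each $x \in T$ fix some $k_x$ so that the heavy slice $B_{x,k_x} := \{y \in B : \ip{x}{y} = k_x\}$ has size at least $C|B|/\sqrt{n}$. For an integer parameter $t$ to be chosen, raising the slice size to the $t$-th power and swapping the order of summation gives
\[
|T|\cdot\left(\frac{C|B|}{\sqrt{n}}\right)^{\!t} \;\leq\; \sum_{x \in T}|B_{x,k_x}|^{t} \;\leq\; \sum_{(y_1,\ldots,y_t) \in B^t} N(y_1,\ldots,y_t),
\]
where $N(y_1,\ldots,y_t) := |\{x \in \nbits^n : \ip{x}{y_i - y_1} = 0 \text{ for } 2 \leq i \leq t\}|$. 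The key reformulation is that ``all of the $\ip{x}{y_i}$ coincide'' is equivalent to ``$x$ is orthogonal to every $y_i - y_1$.''

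To bound the right-hand side, I would invoke the multi-dimensional Littlewood-Offord / Hal\'asz inequality: for integer vectors $v_1,\ldots,v_s$ that are linearly independent with mild genericity, the count $|\{x \in \nbits^n : \ip{x}{v_i} = 0 \text{ for all } i\}|$ is at most $O(2^n / n^{s/2})$. Applied with $s = t-1$ to the differences $y_{i+1} - y_1$, this suggests $\E_{(Y_i) \sim B^t}[N(Y_1,\ldots,Y_t)] \leq O(2^n / n^{(t-1)/2})$ in the regime where most tuples are non-degenerate. Plugging in and rearranging gives $|T| \leq O(2^n \sqrt{n} / C^t)$. Taking $t = n$ and any constant $C \geq 2^{\beta - \delta/2}$ depending on $\beta$ and $\delta$ makes $C^t \geq 2^{(\beta - \delta/2) n}$, and so $|T| \leq 2^{(1-\beta+\delta)n}$ for $n$ sufficiently large.

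The main obstacle is handling the degenerate $t$-tuples---those for which the differences $y_i - y_1$ are linearly dependent, or have special additive structure that causes $N$ to exceed the generic bound. For $t$ of order $n$ a typical random tuple in $B^t$ is degenerate in this sense, so the generic estimate alone is insufficient. The remedy would be to stratify the sum $\sum_{(y_i)} N(y_1,\ldots,y_t)$ according to the actual dimension $d$ of $\mathrm{span}(y_i - y_1 : i \geq 2)$, apply the sharper Hal\'asz bound $N \leq O(2^n / n^{d/2})$ at each stratum, and bound the number of $t$-tuples whose differences span at most a $d$-dimensional subspace by how much of $B$ can lie in affine subspaces of codimension $n - d$. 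If a large portion of $B$ concentrates inside a single such subspace $W$, then the bad directions must essentially lie in $W^{\perp} \cap \nbits^n$, and this set itself can be bounded by $2^{(1-\beta+\delta)n}$ using Littlewood-Offord for the normals to $W$ together with the hypothesis $|B| \geq 2^{\beta n}$.
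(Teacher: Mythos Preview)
Your reduction to the counts $N(y_1,\ldots,y_t)$ is correct, but the Hal\'asz step is where the approach breaks. The bound $N \leq O(2^n/n^{d/2})$ does \emph{not} follow from $d = \dim\mathrm{span}(y_i - y_1)$: the multi-dimensional Hal\'asz inequality controls $\Pr_x\bigl[\sum_j x_j w_j = 0\bigr]$ in terms of how many of the column vectors $w_j=\bigl((y_2-y_1)_j,\ldots,(y_t-y_1)_j\bigr)$ escape any given hyperplane of $\R^{t-1}$, not in terms of the rank of the rows. Concretely, for $t=2$ and $y_1,y_2\in B$ at Hamming distance $2$ one has $d=1$ but $N=2^{n-1}$, not $O(2^n/\sqrt n)$. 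The defect is \emph{sparsity} of the differences $y_i-y_1$, which occurs whenever $B$ contains nearby points (and every $B$ of density $2^{(\beta-1)n}$ does), and your rank stratification does not see it. Nor does the final paragraph about $W^\perp$ help: a direction $x$ can be bad without lying in $W^\perp$, so there is no containment to exploit.

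It is also worth noting that undoing your own swap gives
\[
\sum_{(y_1,\ldots,y_t)\in B^t} N(y_1,\ldots,y_t)\;=\;\sum_{x\in\nbits^n}\ \sum_{k}\ |B_{x,k}|^{t},
\]
so what you are really trying to bound is the averaged $t$-th collision moment $\E_x\sum_k p_{x,k}^t$ of the very distribution whose anti-concentration is at stake. For $t=2$ one can indeed bound this by $O(1/\sqrt n)$ via one-dimensional Erd\H{o}s--Littlewood--Offord together with a Hamming-distance argument in $B$, but a second moment cannot produce an exponentially small bad set; pushing $t$ up to the linear regime your arithmetic needs requires precisely the multi-dimensional control that is missing. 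The paper avoids moments entirely: it bounds the characteristic function $\bigl|\E_Y e^{2\pi i\theta\ip{x}{Y}}\bigr|$ by revealing the coordinates of $Y$ one at a time (Lemma~\ref{lemma:tech}), and then shows via an explicit \emph{encoding} of bad pairs $(x,y)$ (Lemmas~\ref{lemma:encoding1}--\ref{lemma:encoding2}) that only $2^{(1-\beta+\delta)n}$ directions $x$ can fail the resulting Fourier decay. The encoding converts the entropy of $B$ directly into a bound on the number of bad $x$'s and never invokes a multi-dimensional Littlewood--Offord estimate.
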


	The theorem is sharp in the following sense.
	As mentioned above, the $O(\tfrac{1}{\sqrt{n}})$ bound is tight even when $A$ and $B$ are $\{\pm 1\}^n$.
	To see that the bound on the number of bad directions is sharp, observe that if $B \subset \nbits^n$ is the set of $y$'s where the first $(1-\beta)n$ coordinates are set to $1$ and $\sum_{j> (1-\beta)n} y_j = 0$, and $A \subset \nbits^n$ is the set of $x$'s where the last $\beta n$ coordinates are set to $1$ and $\sum_{j \leq (1-\beta)n} x_j = 0$, then 
	$$|B| \approx \tfrac{1}{\sqrt{n}} 2^{\beta n}
	\quad \& \quad |A| \approx \tfrac{1}{\sqrt{n}} 2^{(1-\beta )n},$$ yet 
	$\ip{x}{y}=0$ for every $x \in A$ and $y \in B$. There is no anti-concentration in this case,
	although $|A| \cdot |B|$ is roughly $2^n$.

Our proof builds a flexible framework for proving anti-concentration results in discrete domains. We use this framework to prove a more general theorem, stated below. The theorem extends results from~\cite{Erdos2,Sark,Stan,halasz1977estimates}
from the uniform distribution on $\{\pm 1\}^n$
to the case of non-uniform distributions.

When $Y$ is uniformly distributed, the additive structure of the entries in the direction vector  $x$  
controls anti-concentration~\cite{frankl1988solution}. If $x$ is unstructured, we get stronger anti-concentration bounds for $\ip{x}{Y}$. 
This idea is instrumental when analyzing random
matrices~\cite{kahn,tao2009inverse}.

We choose the direction $x$
from sets of the following form. 
We call a set $A \subset \Z^n$ a two-cube
if $A = A_1 \times A_2 \times \cdots A_n$,
where each $A_j = \{u_j,v_j\}$ consists of two distinct integers. 
The differences of $A$ are the numbers $d_j = u_j - v_j$
for $j \in [n]$.

The following theorem describes three cases 
that yield different anti-concentration bounds.
It shows that the additive structure of 
$A$ is deeply related to the bounds we obtain.
The less structured $A$ is, the stronger the bounds are.

The first bound in the theorem holds for arbitrary two-cubes.
The second bound holds
when all the differences $d_1,\ldots,d_n$ are distinct.
The third bound applies in more general settings where 
the set of differences is unstructured.
This is captured by the following definition. 
A set $S \subset \N$ of size $n$ is called a Sidon set, or a Golomb ruler, if the number of solutions to the equation
$s_1+ s_2 = s_3+ s_4$ for $s_1,s_2,s_3,s_4 \in S$
is $4\cdot {n \choose 2}+n$.
In other words, every pair of integers has a distinct sum. 
Sidon sets were defined by Erd\"os and Tur\'an~\cite{ET}
and have been studied by many others since.
We say that 
$S \subset \Z$ is a weak Sidon set
if the number of solutions to the equation
$\eps_1 s_1+ \eps_2 s_2 = \eps_3 s_3+\eps_4 s_4$ for 
$\eps_1,\ldots,\eps_4 \in \{\pm 1\}$ and
$s_1,\ldots,s_4 \in S$
is at most $100 n^2$. The number $100$ can be replaced by any other constant, we use it here just to be concrete.

\begin{theorem}\label{thm:main1}
For every $\beta > 0$ and $\delta >0$,
there exists $C>0$ such that the following holds. 
Let $A \subset \Z^n$ is a two-cube with differences
$d_1,\ldots,d_n$.
Let $B \subseteq \nbits^n$ be of size $2^{\beta n}$.
Let $Y$ be uniformly distributed in $B$.
\begin{enumerate}
\item For all but $2^{n(1-\beta +\delta)}$ directions $x \in A$, 
		$$\max_{k \in \Z} \Pr_Y\left[\ip{x}{Y} =k\right] \leq 
	C \sqrt{\ln(n)} n^{-0.5} .$$
\item If $d_1,\ldots,d_n$ are distinct, then
for all but $2^{n(1-\beta +\delta)}$ directions $x \in A$, 
		$$\max_{k \in \Z} \Pr_Y\left[\ip{x}{Y} =k\right] \leq 
	C \sqrt{\ln(n)} n^{-1.5} .$$
\item If $\{d_1,\ldots, d_n\}$ is a weak Sidon set of size $n$, then
for all but $2^{n(1-\beta +\delta)}$ directions $x \in A$, 
		$$\max_{k \in \Z} \Pr_Y\left[\ip{x}{Y} =k\right] \leq 
	C \sqrt{\ln(n)} n^{-2.5}.$$
	\end{enumerate}
\end{theorem}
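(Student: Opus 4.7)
The strategy is a high-moment method. For any integer $r\ge 1$,
\[
\max_{k\in\Z} \Pr_Y[\ip{x}{Y}=k] \;\le\; \Bigl(\sum_{k\in\Z} \Pr_Y[\ip{x}{Y}=k]^r\Bigr)^{1/r} \;=\; \Pr_{Y_1,\dots,Y_r}\!\bigl[\ip{x}{Y_i-Y_1}=0\ \forall\,i\ge 2\bigr]^{1/r},
\]
where $Y_1,\dots,Y_r$ are independent copies of $Y$. I would bound this probability, averaged over $x\in A$, via a Hal\'asz-style analysis of the $r-1$ linear equations in $Z$ that parametrise the two-cube $A$.

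Parametrise $A$ by $Z\in\nbits^n$ through $x_j=\mu_j+Z_jd_j/2$ with $\mu_j=(u_j+v_j)/2$. For fixed $y_1,\dots,y_r\in B$, the event $\ip{x}{y_i-y_1}=0$ becomes a linear equation in $Z$ with coefficients $d_j\eps_{i,j}$, where $\eps_{i,j}=(y_{i,j}-y_{1,j})/2\in\{-1,0,1\}$. Multidimensional Fourier inversion gives
\[
\Pr_Z\!\bigl[\,\textstyle\sum_j Z_j d_j \eps_{i,j}=c_i\ \forall\,i\ge 2\bigr]
=\int_{[0,1]^{r-1}}\! e^{-2\pi i\ip{\theta}{c}}\prod_{j=1}^n \cos\!\Bigl(2\pi\sum_{i=2}^r\theta_i d_j\eps_{i,j}\Bigr)\,d\theta.
\]
Since $|\cos(2\pi s)|\le\exp(-\kappa\|s\|^2)$ with $\|\cdot\|$ the distance to the nearest half-integer, the absolute value of the integrand is at most $\exp\!\bigl(-\kappa\sum_j \|\sum_i\theta_i d_j\eps_{i,j}\|^2\bigr)$. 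The three hypotheses on $\{d_j\}$ yield three Hal\'asz-level decay rates ($n^{-1/2}$, $n^{-3/2}$, $n^{-5/2}$) for the measure of $\theta$ where this quadratic form is small.

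Averaging over $(y_1,\dots,y_r)\in B^r$, the typical disagreement patterns $\eps_{i,\cdot}$ have $\Omega(n)$ support, provided $|B|=2^{\beta n}$ is large (which enters through a counting/entropy argument about $B-B$), so the per-equation Hal\'asz bound applies with the full $n$. The $r-1$ equations aggregate to $\E_x[\Pr_Y[\,\cdot\,]]\le (Cn^{-\gamma})^{r-1}$ with $\gamma\in\{1/2,3/2,5/2\}$. Markov's inequality with $r=\Theta(\log n)$ then yields the theorem, and the factor $\sqrt{\log n}$ comes from this optimal choice of $r$.

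The main obstacle is verifying that the $r-1$ equations in $Z$ are ``independent enough'' at typical $(y_1,\dots,y_r)\in B^r$, so that the joint multidimensional Hal\'asz bound factorises into the product of individual bounds. This requires controlling the quadratic form $\sum_j\|\sum_i\theta_i d_j\eps_{i,j}\|^2$ uniformly on typical $\eps$'s, which is exactly where the structural assumption on $\{d_j\}$ enters (via the multiplicity function for the representations $\sum_j\eps_j d_j=t$) and where the hypothesis $|B|=2^{\beta n}$ becomes necessary.
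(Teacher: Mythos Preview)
Your moment-method outline is natural, but the quantitative parameters do not close. The theorem asks that the exceptional set of directions have density at most $2^{-(\beta-\delta)n}$ in the two-cube $A$. Grant your optimistic claim
\[
\E_{x\in A}\Bigl[\Pr_{Y_1,\dots,Y_r}\bigl[\ip{x}{Y_i-Y_1}=0\ \forall\,i\ge 2\bigr]\Bigr]\;\le\;(Cn^{-\gamma})^{\,r-1},
\]
and write $p(x)=\max_k\Pr_Y[\ip{x}{Y}=k]$. Markov then gives
\[
\Pr_{x\in A}\bigl[p(x)>t\bigr]\;\le\;\frac{(Cn^{-\gamma})^{\,r-1}}{t^{\,r}}.
\]
With $t=n^{-\gamma}\sqrt{\log n}$ the right side is of order $C^{r-1}n^{\gamma}(\log n)^{-r/2}$, which for $r=\Theta(\log n)$ is only quasi-polynomially small in $n$, not $2^{-\Omega(n)}$. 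To reach $2^{-(\beta-\delta)n}$ you would need $r$ at least of order $n/\log\log n$. At that scale the obstacle you already flagged stops being a technicality: the $\theta$-integral lives in $[0,1]^{\Theta(n)}$, the $r-1$ linear constraints all share $Y_1$ and $Z$, and there is no reason for the joint bound to factorise as $(Cn^{-\gamma})^{r-1}$. So as written the argument cannot deliver the stated exceptional-set size, regardless of how the Hal\'asz step is filled in.

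The paper's route is genuinely different. It does not average $p(x)^r$ over $x$ uniform in $A$. Instead it assumes, for contradiction, that the bad set $A_0=\{x:\E_\theta|\E_Y e^{2\pi i\theta\ip{x}{Y}}|>\mu\}$ has size at least $2^{n(1-\beta+\delta)}$, samples $X$ uniformly from $A_0$, and bounds $\E_{X,\theta}\bigl|\E_Y e^{2\pi i\theta\ip{X}{Y}}\bigr|^{(1+\nu)^2}$ via two coordinate-wise inductions (one revealing $Y$, one revealing $X$) combined with a strict-convexity inequality for $\zeta\mapsto\zeta^{1+\nu}$. The exponential saving does not come from a high moment; it comes from an encoding lemma showing that the index set $J'(X)\cap J(Y)$ has size $\Omega(n)$ except with probability $2^{-\Omega(n)}$, and this is exactly where both $|B|=2^{\beta n}$ and the assumed largeness of $A_0$ are spent. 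The Hal\'asz quantity $r_\ell(A)$ then enters only through a single scalar variable $\theta$, and the $\sqrt{\log n}$ loss arises from the choice $\nu\sim 1/\log(1/R_C(A))$, not from a moment order.
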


The first bound in Theorem~\ref{thm:main1} nearly implies Theorem~\ref{thm:main}. It is weaker by a factor of $\sqrt{\ln(n)}$. However, it holds for all two-cubes, not just the hypercube $\{\pm 1\}^n$. 
The second bound almost
matches the sharp $O(n^{-1.5})$ bound 
that holds when $(u_j,v_j) = (j,-j)$ for each $j$ and $Y$ is uniform in the hypercube~\cite{Sark,Stan}.
We believe that the $\sqrt{\ln(n)}$ factor 
is not needed, but were not able to eliminate it.
The theorem is, in fact, part of a more general phenomenon.
We postpone the full technical description to
Section~\ref{sec:UTC}.

\subsubsection*{The proof.}
Chakrabarti and Regev's proof  
uses the deep connection between the
discrete cube and Gaussian space.
They proved a geometric correlation inequality in 
Gaussian space, and then translated it to the cube.
Vidick~\cite{vidick2012concentration}
later simplified part of their argument,
but stayed in the geometric setting.
Sherstov~\cite{sherstov2012communication}
found a third proof
that uses Talagrand's inequality from convex geometry~\cite{talagrand1995concentration}
and ideas of Babai, Frankl and Simon
from communication complexity~\cite{babai1986complexity}.

There are several differences between our argument and the ones in~\cite{chakrabarti2012optimal,vidick2012concentration,sherstov2012communication}.
The main difference is that
the arguments from~\cite{chakrabarti2012optimal,vidick2012concentration,sherstov2012communication} are based, in one way or another,
on the geometry of Euclidean space.
The arguments in~\cite{chakrabarti2012optimal,vidick2012concentration}
prove a correlation inequality in Gaussian space and translate
it to the discrete world. 
It seems that such an argument
can not yield point-wise bounds on the
concentration probability. 
A common ingredient in~\cite{chakrabarti2012optimal,sherstov2012communication}
is a step showing that every set of large enough measure
contains many almost orthogonal vectors
(this is called `identifying the hard core' in~\cite{sherstov2012communication}).
In~\cite{vidick2012concentration} this part of the argument
is replaced by a statement about a relevant matrix.
Our argument does not contain such steps.

Let us briefly discuss our proof at a high level. 
The proof is based on harmonic analysis (Section~\ref{sec:aC}).
The argument consists of two parts.
In the first part, we analyze the Fourier behavior
of $\ip{x}{Y}$
for $x$ fixed and $Y$ random.
We are able to identify a collection
of good $x$'s for which the Fourier spectrum
of the distribution of $\ip{x}{Y}$ decays rapidly.
In the second part,
we show that the number of bad $x$'s
is small by giving an explicit encoding of all of them.

Although the proofs of the two theorems
follow a similar strategy,
we were not able to completely merge them. 
The proof for the hypercube (Theorem~\ref{thm:main})
is carried in Section~\ref{sec:HC}.
The proof for general two-cubes (Theorem~\ref{thm:main1})
is given in Section~\ref{sec:UTC}.

\begin{remark*} 
Theorem \ref{thm:main1} can be used as a black box to prove the same bounds when $A_1,\dotsc, A_n$ are pairs of real numbers. To see this, think of the relevant real numbers as a finite dimensional vector space
over the rationals.
\end{remark*}

\newpage

\subsection*{Applications}

\subsubsection*{Communication Complexity}

Chakrabarti and Regev's main motivation was understanding
the randomized communication 
complexity of the gap-hamming problem.
The gap-hamming problem was introduced
by Indyk and Woodruff in the context of streaming algorithms~\cite{indyk2003tight}.
Proving lower bounds on its communication
complexity was a central open problem for almost ten years,
until Chakrabarti and Regev solved it~\cite{chakrabarti2012optimal}.
Vidick~\cite{vidick2012concentration} and Sherstov~\cite{sherstov2012communication}
later simplified the proof.

Our results also imply the lower bound for the randomized
communication complexity of the gap-hamming problem
(see e.g.~\cite{sherstov2012communication}).
As opposed to~\cite{chakrabarti2012optimal,vidick2012concentration,sherstov2012communication}, the proof presented here lies entirely in the discrete domain.
The underlying ideas may therefore be 
of independent interest.
%
%
%
%

\subsubsection*{Pseudorandomness}

Randomness is a computational resource~\cite{trevisan2000extracting}.
There are many sources of randomness,
and some of them are {\em weak} or imperfect.
Randomness extractors allow to use weak sources
of randomness as if they were perfect.

The study of randomness extractors is 
about constructing explicit maps
that transform weak sources of randomness
to almost uniform outputs.
The main goal 
is generating a uniform output
in the most general scenario possible.
This often requires ingenious constructions.

The scenario described above fits 
nicely in the context of
{\em two-source extractors}.
A two-source extractor maps two independent
random variables $X$ and $Y$ with significant min-entropy
to a single almost uniform output.

Chor and Goldreich~\cite{chor1988unbiased} used
Lindsey's lemma to show that
inner product modulo two is a two-source extractor.
Namely, the distribution of the bit $\ip{X}{Y} \ \mathsf{mod} \ 2$
is close to uniform as long as $|A| \cdot |B| \gg 2^{n}$.
Bourgain~\cite{bourgain2005more},
Raz~\cite{raz2005extractors} and
Chattopadhyay and Zuckerman~\cite{chattopadhyay2016explicit}
constructed two-source extractors with much better
parameters. In our work, we study a related but somewhat
different question.

The high-level suggestion is to 
investigate other pseudorandom properties satisfied by 
 known extractors. We already know that inner product is an excellent 
two-source extractor. Our work shows that the
inner product is anti-concentrated over the integers.  Theorems~\ref{thm:main}
and~\ref{thm:main1}, in fact, imply a stronger statement.
It is the analog of ``strong'' extraction in extractor lingo.
Not only is $\ip{X}{Y}$ anti-concentrated,
but an overwhelming majority of fixings $X=x$ lead to $\ip{x}{Y}$  being anti-concentrated.

\subsubsection*{Additive Combinatorics}

Additive combinatorics studies the behavior
of sets under algebraic operations~\cite{Taovu}.
It has many deep results,
and connections to other areas of mathematics,
as well as many applications in computer science.
Our main result can be interpreted
as showing that Hamming spheres are far from being sum-sets. Our results give quantitative bounds on the size of the intersection of any Hamming sphere with a sum-set.

Replace $\{\pm 1\}$ by the field $\F_2$
with two elements.
The sum-set of $A \subseteq \F_2^n$
and $B \subseteq \F_2^n$ is
$$A+B = \{ x + y : x \in A , y \in B\}.$$
If $X$ and $Y$ are sampled uniformly at random from $A$ and $B$, then $X+Y$ is supported on $A+B$.

The cube $\F_2^n$ is endowed with a natural metric---the Hamming distance $\Delta(x,y)$.
The sphere around $0$ is the collection
of all vectors with a fixed number of ones in them
{(a.k.a.\ a slice)}.
The inner product $I = \sum_j (-1)^{X_j} (-1)^{Y_j}$
is similar to the inner product studied above
(here $X_j,Y_j \in \{0,1\}$).
{The inner product is related to the Hamming distance
by} $I(X,Y) = n- 2 \Delta(X,Y)$. We saw that if $|A| \cdot |B| > 2^{1.01n}$, then $I$ is anti-concentrated.
We can conclude that the distribution of the Hamming
distance of $X+Y$ is anti-concentrated.
The set $A+B$ is far from any sphere.

\section{Harmonic Analysis}
\label{sec:aC}
We are interested in proving
anti-concentration for integer-valued random variables. Harmonic analysis is a natural framework 
for studying such random variables~\cite{halasz1977estimates}.

Let $Y$ be distributed in $\{\pm 1\}^n$.
Let $x \in \Z^n$ be a direction. 
Let $\theta$ be uniformly distributed in $[0,1]$,
independently of $Y$.
The idea is to use
\begin{align*}
\Pr_Y[\ip{x}{Y} = k] &= \Ex{Y}{ \Ex{\theta}{  \exp(2\pi i \theta \cdot (\ip{x}{Y}-k)) }}
\end{align*}
to bound
\begin{align}
\notag
\max_{k \in \Z} \Pr_Y[\ip{x}{Y} = k] \leq  \Ex{\theta}{ \Big |\Ex{Y}{ \exp(2\pi i \theta \cdot\ip{x}{Y})} \Big | } . \tag{$\star$}
\end{align}
This inequality is useful for two reasons.
First, the left hand side is a maximum over $k$,
while the right hand side is not.
So, there is one less quantifier to worry about.
Secondly, the right hand side lives in the Fourier
world, where it is easier to argue about
the underlying operators. 
For example, when the coordinates of $Y$
are independent, the expectation over $Y$
breaks into a product of $n$ simple terms.

\section{The Hypercube}
\label{sec:HC}

We start by considering directions in the hypercube $\{\pm 1\}^n$.
The following theorem controls the Fourier coefficients in most 
directions.

\begin{theorem} \label{thm:tech} 
For every $\beta >0$
and $\delta>0$,
there is $c >0$ so that the following holds.
Let $B \subseteq \nbits^n$ be of size $2^{\beta n}$.
For each $\theta \in [0,1]$, 
for all but $2^{n(1-\beta+\delta)}$ directions
$x \in \{\pm 1\}^n$,
\begin{align*}
\left |\Ex{Y}{ \exp(2\pi i \theta \cdot\ip{x}{Y})} \right | > 
 2 \exp (- c n \sin^2(4 \pi \theta )) 
 \end{align*} 
\end{theorem}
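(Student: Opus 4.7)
The plan is to bound the exceptional set
\[
E_\theta \;:=\; \bigl\{x \in \{\pm 1\}^n \;:\; |\hat{Y}_x(\theta)| \leq 2\exp(-cn\sin^2(4\pi\theta))\bigr\}
\]
by $2^{n(1-\beta+\delta)}$, where $\hat{Y}_x(\theta) := \mathbb{E}_Y[\exp(2\pi i\theta\ip{x}{Y})]$ and $Y$ is uniform on $B$. My strategy is a second-moment method over the random direction $x$, upgraded by higher moments to obtain an exponentially small tail, and backstopped by an explicit encoding argument for the cases where the moment method degenerates.

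First I would expand
\begin{align*}
|\hat{Y}_x(\theta)|^2 \;=\; \frac{1}{|B|^2}\sum_{y,y' \in B}\cos\bigl(2\pi\theta\ip{x}{y-y'}\bigr),
\end{align*}
and average over $x \in \{\pm 1\}^n$ using the identity $\mathbb{E}_{x}\bigl[\cos(2\pi\theta\ip{x}{z})\bigr] = \cos^{\Delta(z)}(4\pi\theta)$ for $z\in\{-2,0,2\}^n$, where $\Delta(z) := |\{j : z_j \neq 0\}|$. This yields $\mathbb{E}_x\bigl[|\hat{Y}_x(\theta)|^2\bigr] = |B|^{-2}\sum_{y,y' \in B}\cos^{\Delta(y,y')}(4\pi\theta)$. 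The $|B|$ diagonal pairs already contribute $1/|B| = 2^{-\beta n}$, and the off-diagonal terms are controlled by how $B$ is spread in Hamming distance. The aim is to show $\mathbb{E}_x[|\hat{Y}_x(\theta)|^2] \geq 16\exp(-2cn\sin^2(4\pi\theta))$ for an appropriate constant $c=c(\beta)$, which reduces to comparing $2^{-\beta n}$ against the threshold and choosing $c$ accordingly.

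Next I would promote this first-moment estimate to a concentration statement by computing $\mathbb{E}_x[|\hat{Y}_x(\theta)|^{2k}]$ via the analogous $2k$-fold character averaging over tuples in $B^{2k}$. A careful combinatorial count, organizing the tuples by their pairing/coincidence pattern, bounds the $(2k)$-th moment by $\tfrac{(2k)!}{k!\,2^k}(\mathbb{E}_x[|\hat{Y}_x(\theta)|^2])^k$ up to lower-order corrections. A Paley--Zygmund inequality, or a sharper moment-to-tail conversion at $k \sim n$, then yields
\[
\Pr_x\!\Bigl[|\hat{Y}_x(\theta)|^2 \leq \tfrac14 \mathbb{E}_x[|\hat{Y}_x(\theta)|^2]\Bigr] \;\leq\; 2^{-(\beta-\delta)n},
\]
which multiplied by $2^n$ gives the desired $|E_\theta| \leq 2^{n(1-\beta+\delta)}$.

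The main obstacle is handling adversarial $B$ whose pairwise Hamming-distance distribution makes the off-diagonal cosine sum destructively interfere with the diagonal, undercutting the lower bound on $\mathbb{E}_x[|\hat{Y}_x(\theta)|^2]$. To bypass this, I would supplement with an encoding step aligned with the paper's own strategy: each $x \in E_\theta$ witnesses a specific equidistribution of the phases $\theta\ip{x}{y} \bmod 1$ over $y \in B$, which constrains $x$ to a coset of a rational lattice spanned by a small collection of distinguished differences $y - y' \in B - B$. Writing $x$ as a short list of elements of $B$ together with rounded phases and a lattice label should produce a description of length at most $n(1-\beta+\delta)$ bits, delivering the bound on $|E_\theta|$ directly. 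Stitching the moment and encoding regimes into a single clean argument, and tracking the $\sin^2(4\pi\theta)$ dependence cleanly through both, will be the delicate part.
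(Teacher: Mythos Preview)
Your proposal is aimed at the wrong inequality. You define $E_\theta = \{x : |\hat Y_x(\theta)| \le 2\exp(-cn\sin^2(4\pi\theta))\}$ and set out to prove $|E_\theta|$ is small, i.e., that for most directions the Fourier coefficient is \emph{large}. But the theorem, as it is actually used in the proof of Theorem~\ref{thm:main} and proved in Section~\ref{sec:mainPf}, asserts the opposite: the set $A_\theta=\{x:|\hat Y_x(\theta)|>2\exp(-cn\sin^2(4\pi\theta))\}$ has size at most $2^{n(1-\beta+\delta)}$, so that for most $x$ the Fourier coefficient is \emph{small}. (The displayed ``$>$'' in the statement is a typo; both the application and the proof make clear that ``$\le$'' is intended.) A sanity check: at $\theta=0$ one has $|\hat Y_x(0)|=1\le 2$ for every $x$, so your $E_0=\{\pm1\}^n$ and certainly cannot have size $\le 2^{n(1-\beta+\delta)}$. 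Consequently your entire moment machinery---lower-bounding $\mathbb E_x|\hat Y_x(\theta)|^2$, then Paley--Zygmund to show the lower tail is thin---is pointed the wrong way; what is needed is an \emph{upper} tail bound, and there the diagonal term $2^{-\beta n}$ in $\mathbb E_x|\hat Y_x(\theta)|^2$ already exceeds any threshold of the form $\exp(-\Theta(n)\sin^2(4\pi\theta))$, so a naive moment argument in the correct direction also stalls.

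The paper's route is entirely different and does not average over $x$. For a fixed $x$ it unrolls $Y$ coordinate by coordinate to get $|\hat Y_x(\theta)|^2\le \mathbb E_Y\prod_j\bigl(1-\gamma_j\sin^2(\phi_j+2\pi\theta x_j)\bigr)$, where $\gamma_j$ is the conditional min-probability of $Y_j$ and $\phi_j$ is a phase determined by $y_{<j}$ and $x_{>j}$ (Lemma~\ref{lemma:tech}). The heart of the argument is then a pair of encoding lemmas (Lemmas~\ref{lemma:encoding1} and~\ref{lemma:encoding2}) showing that for all but $2^{n(1-\beta+\delta)}$ choices of $x$, with high $Y$-probability there are $\Omega(n)$ coordinates $j$ with both $\gamma_j\ge\kappa$ and $\sin^2(\phi_j+2\pi\theta x_j)\ge\tfrac14\sin^2(4\pi\theta)$. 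The key combinatorial fact driving the encoding of bad $x$ is that, once $x_{>j}$ (hence $\phi_j$) is known, at most one value of $x_j\in\{\pm1\}$ can make $|\sin(\phi_j+2\pi\theta x_j)|$ small (Claim~\ref{clm:OneGoodChoice}); this lets one reconstruct bad $x$'s from roughly $(1-\beta)n$ bits. Your sketched encoding via ``cosets of a rational lattice spanned by differences in $B-B$'' does not capture this mechanism.
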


The rest of this section is devoted for proving
this theorem
(the proof appears in Section~\ref{sec:mainPf}).

\begin{proof}[Proof of Theorem~\ref{thm:main}] 
Theorem~\ref{thm:tech} promises that for each $\theta \in [0,1]$,
the size of
		\begin{align*}
		A_\theta = \Big\{x \in \{\pm 1\}^n :
		\Big |\Ex{Y}{ \exp(2\pi i \theta \cdot\ip{x}{Y})} \Big | > 
		2 \exp (-c n \sin^2(4 \pi \theta ))  \Big\}
		\end{align*} 
is at most $2^{n(1-\beta+\delta)}$.
		For each $x$, define 
		$$S_x = \{\theta \in [0,1]: x \in A_\theta\}.$$ 
		Since 
		$$\E_x |S_x| = \E_\theta \tfrac{|A_\theta|}{2^n} 
		\leq 2^{n(-\beta+\delta)},$$  
		by Markov's inequality,
		the number of $x \in A$ for which $|S_x| > 2^{-\delta n}$ is at most $2^{(1-\beta + 2\delta)n}$. 
Fix $x$ such that $|S_x| \leq 2^{-\delta n}$.
Bound
\begin{align*}
\Pr_Y[\ip{x}{Y} = k] &\leq  \Ex{\theta}{\Big |\Ex{Y}{ \exp(2\pi i \theta \cdot\ip{x}{Y})} \Big | } \\
&\leq   2^{-\delta n} +  2 \int_{0}^{1} \exp(- c n \sin^2(4 \pi\theta)) \, \mathrm{d}\theta .
\end{align*}
Each term in the integral occurs eight times;
it circles the origin twice.
Using the inequality $\sin(\zeta) > \tfrac{\zeta}{\pi}$ for $0<\zeta<\tfrac{\pi}{2}$, we can bound it by
\begin{align*}
&\leq   2^{-\delta n} + 16 \int_{0}^{1/8} \exp(- 16 c n \theta^2) \, \mathrm{d}\theta \\
& \leq   2^{-\delta n} +  \frac{8}{\sqrt{n}} \cdot \int_{-\infty}^{\infty} \exp(- 16 c  \zeta^2) \, \mathrm{d}\zeta.
\end{align*}
The integral converges, so this quantity is at most $\tfrac{C
}{\sqrt{n}}$.
%
%
%
%

\end{proof}

\subsection{A Single Direction}
\label{sec:fourierbound}
In this section we analyze
the behavior of $\ip{x}{Y}$ for a single direction $x \in \Z^n$.
{We also focus on a single Fourier coefficient $\Ex{Y}{\exp({i \eta \ip{x}{Y}})}$ for a fixed
angle $\eta \in [0,2\pi]$.}

We reveal the entropy of $Y$ coordinate by coordinate.
To keep track of this entropy, define
the following functions $\gamma_1,\ldots,\gamma_n$
from $B = \text{supp}(Y)$ to $\R$.
For each $j \in [n]$, let
$$\gamma_j(y) = \gamma_j(y_{<j}) 
= \min_{\epsilon \in \nbits} \Pr[Y_j = \epsilon | Y_{<j}= y_{<j}].$$

To understand the interaction between $x$
and $y$, we use the following $n$ measurements.
For $j \in [n-1]$,
define $\phi_j(x,y)$ to be half of the phase of the complex number 
$$\Ex{Y_{>j}|Y_j=1,Y_{<j}=y_{<j}}{
\exp({i \eta \ip{x_{>j}}{Y_{>j}}})} \cdot \overline{\Ex{Y_{>j}|Y_j=-1,Y_{<j}=y_{<j}}{\exp({i \eta \ip{x_{>j}}{Y_{>j}}})}}.$$
This quantity is not defined when $\gamma_j(y)=0$.
In this case, set $\phi_j(x,y)$ to be zero. 
Define $\phi_n(x,y)$ to be zero.
The number $\phi_j(x,y)$ is determined by $y_{< j}$ and $x_{>j}$.

In the following we think of $x$ as fixed,
and of $\gamma_j$ and $\phi_j$ as random variables
that are determined by the random variable $Y$.

\begin{lemma} \label{lemma:tech} 
For each $x \in \R^n$,
every random variable $Y$ over $\{\pm 1\}^n$,
and every angle $\eta \in \R$,
{\begin{align*}
	\left |\Ex{Y}{\exp({i \eta \ip{x}{Y}})} \right |^2 & \leq \Ex{Y}{
\prod_{i \in [n]} (1- \gamma_j \sin^2( \phi_j + x_j \eta ))}.
	\end{align*}}
\end{lemma}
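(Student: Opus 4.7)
The plan is to proceed by backward induction on $j$, revealing the coordinates of $Y$ one at a time. Define the partial conditional expectations
\[
h_j(y_{<j}) = \E_{Y_{\geq j}\mid Y_{<j}=y_{<j}}\!\left[\exp\!\left(i\eta \langle x_{\geq j}, Y_{\geq j}\rangle\right)\right],
\]
so $h_1 = \E_Y[\exp(i\eta \langle x, Y\rangle)]$ and $h_{n+1} \equiv 1$. The inductive claim will be
\[
|h_j(y_{<j})|^2 \leq \E\!\left[\prod_{k \geq j}\bigl(1 - \gamma_k \sin^2(\phi_k + x_k\eta)\bigr) \,\Big|\, Y_{<j} = y_{<j}\right],
\]
with the case $j=1$ recovering the lemma and $j = n+1$ being the trivial base case (both sides equal $1$).

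For the inductive step, expand
\[
h_j(y_{<j}) = p\, e^{i\eta x_j}\, a + (1-p)\, e^{-i\eta x_j}\, b,
\]
where $p = \Pr(Y_j = 1 \mid Y_{<j} = y_{<j})$, $a = h_{j+1}(y_{<j},1)$, and $b = h_{j+1}(y_{<j},-1)$. By the definition of $\phi_j$, we have $a\bar{b} = |a||b|\, e^{2i\phi_j}$. Factoring out the common phase $e^{i(\arg a + \arg b)/2}$ and setting $\tau = \eta x_j + \phi_j$, the expression simplifies to
\[
|h_j(y_{<j})|^2 = \Bigl|\,p|a|\,e^{i\tau} + (1-p)|b|\,e^{-i\tau}\,\Bigr|^2 = \bigl(p|a| + (1-p)|b|\bigr)^2 - 4p(1-p)|a||b|\sin^2\tau.
\]

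Combining this with the tower property and the inductive hypothesis applied to both $h_{j+1}(y_{<j}, \pm 1)$ (so that $p|a|^2 + (1-p)|b|^2 \leq \E[\prod_{k \geq j+1}(\cdots) \mid y_{<j}]$), the induction reduces to proving the single-step inequality
\[
\bigl(p|a| + (1-p)|b|\bigr)^2 - 4p(1-p)|a||b|\sin^2\tau \leq (1 - \gamma_j \sin^2\tau)\bigl(p|a|^2 + (1-p)|b|^2\bigr).
\]
Using the variance identity $pu^2+(1-p)v^2 = (pu+(1-p)v)^2 + p(1-p)(u-v)^2$ together with the algebraic rewriting $(u-v)^2 + 4uv\, s = (u+v)^2 s + (u-v)^2(1-s)$, this rearranges to the sufficient condition
\[
\gamma_j\bigl(p|a|^2 + (1-p)|b|^2\bigr) \leq p(1-p)\bigl(|a|+|b|\bigr)^2,
\]
which follows from $\gamma_j = \min(p, 1-p)$: assuming WLOG $p \leq 1/2$ so $\gamma_j = p$, it reduces to $(2p-1)|a|^2 \leq 2(1-p)|a||b|$, which is immediate since the left side is nonpositive.

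The main obstacle is the phase-factorization observation: recognizing that the individual phases of $a$ and $b$ enter $|h_j|^2$ only through $\phi_j = \tfrac{1}{2}\arg(a\bar{b})$ and are entirely absorbed into the shift $\tau = \eta x_j + \phi_j$. This is precisely why $\phi_j$ appears in the statement. Once this is in place, the remainder is careful bookkeeping of conditional expectations together with the clean algebraic inequality above. The degenerate case $\gamma_j = 0$ (i.e.\ $p \in \{0,1\}$) is handled automatically: the corresponding factor in the product becomes $1$, and the recursion degenerates to $|h_j|^2 = |h_{j+1}|^2$.
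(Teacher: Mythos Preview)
Your proof is correct and follows essentially the same inductive strategy as the paper: reveal one coordinate of $Y$ at a time, use the definition of $\phi_j$ to write $|h_j|^2$ in terms of $|a|,|b|$ and the angle $\tau=\phi_j+x_j\eta$, and then reduce to the single-step inequality $(p|a|+(1-p)|b|)^2-4p(1-p)|a||b|\sin^2\tau \le (1-\gamma_j\sin^2\tau)(p|a|^2+(1-p)|b|^2)$. The only noteworthy difference is that the paper proves this step by splitting on the sign of $\cos(2\tau)$, whereas your variance-identity rewrite handles both cases at once; this is a cosmetic improvement rather than a different approach.
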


\begin{proof}
The proof is by induction on $n$.
We prove the base case of the  induction and the inductive  step 
simultaneously. 
Express
	\begin{align*}
\left |\Ex{Y}{\exp(i \eta \ip{x}{Y})} \right |^2  &= \left|\Ex{Y_1}{ \exp(i \eta  x_1 Y_1) \cdot \Ex{Y_{>1}|Y_1}{\exp(i \eta  \ip{x_{>1}}{Y_{>1}})}}\right|^2 \\
& = \left|p_1 \exp(i \eta x_1) Z_1 + p_{-1} \exp(- i \eta  x_1 ) Z_{-1}\right|^2,
\end{align*}
where for $\epsilon \in \{\pm 1\}$, $$p_\epsilon = \Pr[Y_1 = \epsilon]
\qquad \& \qquad Z_{\epsilon} = \Ex{Y|Y_1 =\epsilon}{\exp(i \eta \ip{x_{>1}}{Y_{>1}})}.$$
When $n=1$, we have $Z_1=Z_{-1}=1$.	 
Rearranging,
\begin{align*} 
&\left|p_1 \exp(i \eta  x_1) Z_1 + p_{-1} \exp(-i \eta  x_1 ) Z_{-1}\right|^2 \\ 
& = p_1^2 |Z_1|^2 + p_{-1}^2 |Z_{-1}|^2 + p_1 p_{-1} (Z_1 \overline{Z_{-1}} \exp(i2\eta  x_1)+ \overline{Z_1} Z_{-1} \exp(-i2\eta x_1)) \\
&= p_1^2 |Z_1|^2 + p_{-1}^2 |Z_{-1}|^2 + 2 p_1 p_{-1} |Z_1| |Z_{-1}|  \cos(2 \phi_1 + 2 x_1 \eta) .
\end{align*} 
The last equality holds by the definition of $\phi_1$.

{There are two cases to consider.
When $\cos(2 \phi_1 + 2 x_1 \eta) < 0$, we continue to bound
\begin{align*} 
& < p_1^2 |Z_1|^2 + p_{-1}^2 |Z_{-1}|^2  \\
& \leq (p_1 |Z_1|^2 + p_{-1} |Z_{-1}|^2)(1-\gamma_1) \\
& \leq (p_1 |Z_1|^2 + p_{-1} |Z_{-1}|^2)(1-\gamma_1 \sin^2(\phi_1 +x_1 \eta)) .
\end{align*} 
Recall that $\gamma_1$ and $\phi_1$
do not depend on $Y$.
When $\cos(2 \phi_1 + 2 x_1 \eta) \geq 0$,
using the inequality $a^2 + b^2 \geq 2ab$, we bound}
\begin{align*}
&\leq p_1^2 |Z_1|^2 + p_{-1}^2 |Z_{-1}|^2 + p_1 p_{-1}(|Z_1|^2+  |Z_{-1}|^2)  \cos(2\phi_1 + 2x_1 \eta) \\
&= p_1 |Z_1|^2 (p_1 + p_{-1} \cos(2\phi + 2x_1 \eta)) + p_{-1} |Z_{-1}|^2 (p_{-1} + p_{1} \cos(2\phi_1 + 2x_1 \eta)) \\
& \leq (p_1 |Z_1|^2 + p_{-1} |Z_{-1}|^2)   (1-\gamma_1 + \gamma_1 \cos(2 \phi_1 + 2x_1 \eta)) \\
& = \Ex{Y_1}{|Z_{Y_1}|^2}  (1 -2 \gamma_1  \sin^2(\phi_1 + x_1 \eta)) .
\end{align*}
When $n=1$, we have proved the base case of the induction.
When $n>1$, apply induction
on $|Z_\eps|^2$.
\end{proof}

\subsection{A Few Bad Directions} 
\label{sec:encoding}

Lemma~\ref{lemma:tech} suggests 
proving that the expression
$\sum_j \gamma_j \sin^2( \phi_j + x_j \eta )$
is typically large. Namely, we aim to show that there are usually many coordinates $j$ for which 
both $\gamma_j$ and $\sin^2(\phi_j +  x_j \eta )$ are bounded away from zero.
{Our approach is to explicitly encode the cases
where this fails to hold.}

Recall that $Y$ is uniformly distributed in a set $B$ of size $|B| = 2^{\beta n}$.
Let $1\geq \lambda>1/n$ be a parameter.  
Set $0 < \kappa < \tfrac{1}{2}$ and $1\geq \tau>0$ to be parameters satisfying the conditions
\begin{align}
\label{1}
H\left(\tfrac{1}{\log(1/\kappa)}\right) =  \tau + H\left(\tau \right)= \lambda , 
\end{align}
where $H$ is the binary entropy function: $$H(\xi) = \xi \log(1/\xi) + (1-\xi) \log(1/(1-\xi)).$$

The encoding is based on the following two sets:
$$J(y)  = J_{B,\kappa}(y) = \{ j \in [n] : \gamma_j(y) \geq \kappa\}$$ 
and
$$G(x,y) = G_{B,\kappa,\theta}(x,y) = \Big\{j \in J(y) : \sin^2(\phi_j(x,y) + x_j \eta ) \geq \tfrac{\sin^2(2\eta)}{4}\Big\}.$$
We start by showing that there 
are few $y$'s for which $|J(y)|$ is small.

\begin{lemma}\label{lemma:encoding1}
The number of $y \in B$ with $|J(y)| \leq n(\beta -3\lambda)$ is at most $2^{n(\beta - 2\lambda)}$.
\end{lemma}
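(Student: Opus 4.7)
The plan is an explicit injective encoding argument. Let $A := \{y \in B : |J(y)| \leq n(\beta - 3\lambda)\}$ and, for each $y \in B$, define the ``minority positions''
$$T(y) := \{j \in [n] : \Pr[Y_j = y_j \mid Y_{<j} = y_{<j}] < \kappa\} \subseteq [n] \setminus J(y).$$
Since $\Pr[Y = y] = 2^{-\beta n}$ factors as a product of conditional probabilities, each of which is at most $\kappa$ on $T(y)$, one obtains $\kappa^{|T(y)|} \geq 2^{-\beta n}$ and hence
$$|T(y)| \leq \beta n / \log(1/\kappa) \leq n H^{-1}(\lambda),$$
using the defining relation $H(1/\log(1/\kappa)) = \lambda$ and $\beta \leq 1$.

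The encoding is $\phi(y) := (T(y),\, b(y))$ where $b(y) := (y_j)_{j \in J(y)}$. Its injectivity is checked by a deterministic left-to-right decoder: having reconstructed $y_{<j}$, one computes $\gamma_j(y_{<j})$; if $\gamma_j(y_{<j}) \geq \kappa$, read the next bit of $b$ and assign it to $y_j$; otherwise assign $y_j$ to the minority value (the unique value with conditional probability $< \kappa$) if $j \in T$, and to the majority value otherwise.

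The crux, and the step I expect to be the main obstacle, is to show that for each fixed $T$ the family $\{b(y) : y \in A,\ T(y) = T\}$ is a \emph{prefix-free} code with codeword lengths at most $n(\beta - 3\lambda)$. If $b(y)$ were a strict prefix of $b(y')$ for two such $y, y'$, then the decoder runs on $(T, b(y))$ and $(T, b(y'))$ would produce identical partial $y$'s after consuming the first $|b(y)|$ bits; thereafter the first decoder can only continue successfully if $\gamma_j < \kappa$ at every remaining position (it has no more bits to read), but then the second decoder would face the same situation and would not need any further bits, contradicting $|b(y')| > |b(y)|$. Kraft's inequality applied to this prefix-free family then gives at most $2^{n(\beta - 3\lambda)}$ strings.

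Finally, summing over admissible $T$ and using the standard entropy bound $\sum_{t \leq np}\binom{n}{t} \leq 2^{nH(p)}$ with $p = H^{-1}(\lambda) \leq 1/2$ yields
$$|A| \leq \binom{n}{\leq n H^{-1}(\lambda)} \cdot 2^{n(\beta - 3\lambda)} \leq 2^{n\lambda} \cdot 2^{n(\beta - 3\lambda)} = 2^{n(\beta - 2\lambda)}.$$
A naive count that merely summed $\sum_{m \leq M} 2^m$ in place of the Kraft bound $2^M$ would lose a factor of two and narrowly miss the stated bound, so the prefix-free observation is essential.
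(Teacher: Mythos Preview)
Your proof is correct and follows essentially the same encoding strategy as the paper: encode each bad $y$ by its set of minority positions together with the bits $(y_j)_{j\in J(y)}$, and decode left to right. The one technical difference is in how the variable-length bit string is handled. The paper simply pads $(y_j)_{j\in J(y)}$ to a fixed-length vector $q\in\{\pm 1\}^{\lfloor n(\beta-3\lambda)\rfloor}$; since the decoder discovers $J(y)$ on the fly and reads exactly $|J(y)|$ entries of $q$, the padding is irrelevant and injectivity is immediate, giving the factor $2^{\lfloor n(\beta-3\lambda)\rfloor}$ with no further argument. Your route via prefix-freeness and Kraft's inequality is valid and reaches the same bound, but it is more work than necessary; the observation you call ``essential'' is in fact bypassed by padding. (The paper also uses the cruder estimate $\kappa^{|S|}\ge 2^{-n}$ in place of your $\kappa^{|T(y)|}\ge 2^{-\beta n}$, which makes no difference since $\beta\le 1$.)
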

\begin{proof}
If $3 \lambda > \beta$, 
the statement is trivially true. So, in the rest of the proof, assume that $3\lambda \leq \beta$.
Each  $y \in B$ with $|J(y)| \leq  n(\beta - 3 \lambda)$  can be uniquely  encoded by the following data:
\begin{itemize}
\item[--] An vector $q \in \nbits^{t}$ with
$t = \lfloor n(\beta - 3 \lambda) \rfloor$.
\item[--] A subset $S \subseteq [n]$ of size $|S| \leq \tfrac{n}{ \log(1/\kappa)}$.
\end{itemize} 
Let us describe the encoding. 
The vector $q$ encodes 
the values taken by $y$ in the coordinates $J(y)$. We do not encode $J(y)$ itself, only the values of $y$ in the coordinates corresponding to $J(y)$.  
The set $S$ includes $j \in [n]$ if and only if 
$\gamma_j(y) < \kappa$.
Each string $y \in B$ has probability at least $2^{-n}$.
This implies that
$\kappa^{|S|} \geq 2^{-n}$. 

We can reconstruct $y$ from $q$ and $S$ 
by iteratively computing $y_1$, then $y_2$, and so on, until we 
get to $y_n$. Whether or not $1 \in J(y)$ is determined even before we know $y$. If $1 \in J(y)$ then $q$ tells us
what $y_1$ is.
If $1 \not \in J(y)$ and $1 \in S$ then
$y_1$ is the least likely value between $\pm 1$.
If $1 \not \in J(y)$ and $1 \not \in S$
then $y_1$ is the more likely value.
Given the value of $y_1$,
we can continue in the same way to compute the rest of $y$.

The number of choices for $q$ is at most $2^{n(\beta - 3\lambda)}$. 
The number of choices for $S$ is at most 
$2^{n H(1/\log(1/\kappa))} = 2^{\lambda n}$.
\end{proof}

Next, we argue that there 
are few $x$'s for which there are many $y$'s with small $G(x,y)$.

\begin{lemma} \label{lemma:encoding2}
 The number of $x \in A$ for which $$\Pr_Y[ |G(x,Y)| \leq \tau n ] \geq 2^{-\lambda n}$$ is at most $2^{n(1-\beta+6\lambda)}$. 
\end{lemma}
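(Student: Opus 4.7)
My plan is to explicitly encode every bad direction $x$ using few bits, via an auxiliary $y \in B$ where $G(x,y)$ is small. For each bad $x$, the set $B_x = \{y \in B : |G(x,y)| \leq \tau n\}$ has size at least $2^{(\beta-\lambda)n}$; combining with Lemma~\ref{lemma:encoding1} I can restrict to the at least $\tfrac{1}{2} 2^{(\beta-\lambda)n}$ of these $y$'s that also satisfy $|J(y)| \geq n(\beta-3\lambda)$, once $n$ is large enough.

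The key trigonometric observation I would establish first is that for any $\phi,\eta \in \R$, at most one of the two values $x_j \in \{\pm 1\}$ can satisfy $\sin^2(\phi + x_j\eta) < \tfrac{\sin^2(2\eta)}{4}$. Indeed, from the identity $\sin^2(\phi+\eta) + \sin^2(\phi-\eta) = 1 - \cos(2\phi)\cos(2\eta)$, if both inequalities held we would get $1 + \cos^2(2\eta) < 2|\cos(2\eta)|$, i.e.\ $(|\cos(2\eta)|-1)^2 < 0$, a contradiction. Therefore, for every $j \in J(y) \setminus G(x,y)$, the value of $x_j$ is uniquely determined by $\phi_j(x,y)$, and $\phi_j(x,y)$ in turn depends only on $x_{>j}$ and $y_{<j}$.

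The encoding of each pair $(x,y)$ with $x$ bad and $y$ as above then consists of four parts: the string $y$ using $\beta n$ bits; the subset $G = G(x,y) \subseteq J(y)$, of size at most $\tau n$, using $nH(\tau) + O(\log n)$ bits; the values of $x_j$ for $j \notin J(y)$, using at most $n(1-\beta+3\lambda)$ bits; and the values of $x_j$ for $j \in G$, using at most $\tau n$ bits. A decoder recovers $y$ (and hence $J(y)$, which is determined by $y$ and $B$), then processes coordinates from $j=n$ down to $j=1$: when $j \notin J(y)$ or $j \in G$ it reads $x_j$ directly from the stream, and otherwise it computes $\phi_j(x,y)$ from the already-decoded $x_{>j}$ and the known $y_{<j}$ and sets $x_j$ to the unique $\pm 1$ satisfying the $\sin^2$ inequality guaranteed by the observation above.

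Summing the four contributions and using the defining relation $\tau + H(\tau) = \lambda$, the total encoding length is $n(1 + 4\lambda) + O(\log n)$. Since each bad $x$ contributes at least $\tfrac{1}{2} 2^{(\beta-\lambda)n}$ distinct encoded pairs, the number of bad directions is at most $2^{n(1-\beta+5\lambda) + O(\log n)}$, which is $\leq 2^{n(1-\beta+6\lambda)}$ for $n$ sufficiently large. The main obstacle in the plan is the trigonometric uniqueness step, because that is exactly what provides compression at the $j \in J(y)\setminus G(x,y)$ positions, and this compression is the source of essentially all the savings in the count.
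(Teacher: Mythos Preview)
Your proposal is correct and follows essentially the same route as the paper. The paper phrases the double count as an edge count in the bipartite graph on $\calX \times B$ (lower bounding edges by $2^{-\lambda n}|\calX||B|$ and upper bounding via Lemma~\ref{lemma:encoding1} plus a per-$y$ degree bound obtained by the same encoding of $x$ from $q$, $G(x,y)$, $r$), whereas you encode the pair $(x,y)$ directly and divide out the multiplicity in $y$; these are the same count. Your trigonometric uniqueness step is equivalent to the paper's Claim~\ref{clm:OneGoodChoice} specialized to $u=1$, $v=-1$, proved via the identity $\sin^2(\phi+\eta)+\sin^2(\phi-\eta)=1-\cos(2\phi)\cos(2\eta)$ instead of the $\max$ formulation; both yield that at most one sign of $x_j$ can fail the $\tfrac{\sin^2(2\eta)}{4}$ threshold.
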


\begin{proof}
The lemma is proved by double-counting
the edges in a bipartite graph.
Let ${\calX}$ be the set we are interested in:
	$$\calX = \big\{x : \Pr_Y[ |G(x,Y)| \leq \tau n ] \geq 2^{-\lambda n}\big\}.$$
The left side of the bipartite graph is $\calX$
and the right side is $B$.
Connect $x \in \calX$ to $y \in B$ by an edge
if and only if $G(x,y) \leq \tau n$.
Let  $E$ denote the set of edges in this graph.

First, we bound the number of edges from below.
The number of edges that
touch each $x \in \calX$ is at least
$2^{-\lambda n} |B|$.
It follows that
$$|E| \geq 2^{-\lambda n} \cdot |\calX| \cdot |B|.$$

Next, we bound the number of edges from above.
By Lemma~\ref{lemma:encoding1},
the number of 
$y \in B$ so that $|J(y)| \leq n(\beta -3\lambda)$
is at most $2^{-2\lambda n}|B|$.
We shall prove that the number of edges
that touch each $y$ with  $|J(y)| > n(\beta -3\lambda)$ is at most
$2^{n(1-\beta + 4 \lambda)}$.
It follows that 
$$|E| \leq 2^{-2\lambda n }\cdot |\calX|\cdot |B|
+ |B|\cdot 2^{n(1-\beta + 4 \lambda)}.$$
We can conclude that
\begin{align*}
2^{-\lambda n}\cdot  |\calX| \cdot |B| &\leq 2^{-2\lambda n}\cdot |\calX| \cdot |B|
+ |B| \cdot 2^{n(1-\beta + 4 \lambda)} \\
\Rightarrow |\calX| & \
\leq 2^{n(1-\beta + 6 \lambda)},
\end{align*}
since $\lambda n >1$.

It remains to fix $y$ so that
$|J(y)| > n(\beta -3\lambda)$ and bound its degree from above.
This too is achieved by an encoding argument.
Encode each $x$ that is connected
to $y$ by an edge using the following data:
	\begin{itemize}
		\item[--] A vector $q \in \nbits^{t}$
		with $t = \lfloor n(1-\beta+3\lambda) \rfloor$.
		\item[--] The set $G(x,y)$.
		\item[--] A vector $r \in \nbits^{s}$ with
		$s = \lfloor \tau n \rfloor$.
	\end{itemize}
Let us describe the encoding. 
The vector $q$ specifies the values of $x$ on coordinates not in $J(y)$.
There are at most $n- n(\beta -3\lambda) = n(1-\beta + 3\lambda)$ such coordinates. 
The size of $G(x,y)$ is at most $\tau n$.
The vector $r$ specifies the values of $x$ in the coordinates of $G(x,y)$, written in descending order. 
    
    The decoding of $x$ from $q,S$ and $r$ is done as follows.
    Decode the coordinates of $x$ in descending order from $n$ to $1$. 
    If $n \not \in J(y)$ then we read the value of $x_n$ from $q$. 
    If $n \in J(y)$ and $n  \in G(x,y)$, we decode $x_n$ by reading its value from $r$. If $n \in J(y)$ and 
   $n \notin G(x,y)$, 
  then 
  $$\sin^2(\phi_n(x,y)  + x_n \eta) \leq \tfrac{\sin^2(2\eta )}{4}.$$ 
   The number $\phi_n(x,y)$ does not depend on $x$.
   The following claim implies that there is at most one value of $x_n$ that satisfies this property.
   
   \begin{claim}
   \label{clm:OneGoodChoice}
   For all $\varphi \in \R$ and $u,v \in \Z$,
   \begin{align*}
   \max\{ |\sin(\varphi +\eta u)| , | \sin(\varphi + \eta v)| \}
\geq \tfrac{|\sin(\eta (u - v))|}{2}. 
   \end{align*}
\end{claim}

\begin{proof}
Consider the map 
$$\varphi\mapsto g(\varphi) =  
\max \{|\sin(\varphi +\eta u)| , |\sin(\varphi + \eta v)|\}.$$
The minimum of this map 
is attained when $$|\sin(\varphi +\eta u)| = |\sin(\varphi + \eta v)|.$$ This happens only when 
{$\varphi = -\tfrac{\eta (u + v)}{2} 
\ \mod \ \tfrac{\pi}{2} \Z$.
By symmetry,}
\begin{align*}
g(\varphi) &\geq \min \{g(-\eta (u + v)/2),g(-\eta (u + v)/2+\pi/2)\} \\
&\geq
|\sin(\eta (u-v)/2) \cdot \cos (\eta (u - v)/2)|
\\
&= \frac{|\sin(\eta (u-v))|}{2} . \qedhere
\end{align*}
\end{proof}

The claim implies that we can indeed reconstruct $x_n$.
Given $x_n$, we can similarly reconstruct
$x_{n-1}$, since $\phi_{n-1}$ depends only on $y$ and $x_n$. Continuing in this way, we can reconstruct $x_{n-2},\dotsc,x_1$. The total number of choices for $q,S,r$ is at most 
$2^{n(1-\beta+3\lambda) + n H(\tau) + \tau n  }
= 2^{n(1-\beta + 4 \lambda)}$.\qedhere

    
\end{proof}

\subsection{Putting It Together}
\label{sec:mainPf}

\begin{proof}[Proof of Theorem \ref{thm:tech}]
Set $\lambda = \tfrac{\delta}{6}$.
By Lemma \ref{lemma:tech}, 
	\begin{align*}
	\left| \Ex{Y}{\exp(2 \pi i \theta \ip{x}{Y})} \right| &
 \leq 
 \sqrt{\Ex{Y}{\exp\left (- \sum_{j=1}^n \gamma_j \sin^2(\phi_j + 2 \pi \theta  x_j )\right)}.}
\end{align*}
Whenever $x$ is such that 
\begin{align}
\label{2}
\Pr_Y[ G(x,Y) \leq \tau n ] < 2^{-\lambda n},
\end{align}
we can bound 
$$ \Ex{Y}{\exp\left (- \sum_{j=1}^n \gamma_j \sin^2(\phi_j + 2 \pi \theta  x_j )\right)}
\leq \exp  (- \tfrac{ \kappa}{4} n \tau \sin^2(4 \pi \theta))  
+ 2^{-\lambda n}.$$
Since
$\sqrt{a+b} \leq \sqrt{a}+\sqrt{b}$ for $a,b \geq 0$,
for such an $x$ we can bound
	\begin{align*}
	\left| \Ex{Y}{\exp(2 \pi i \theta \ip{x}{Y})} \right| &
\leq  \exp  (- \tfrac{ \kappa}{8} n \tau \sin^2(4 \pi \theta))  
+ 2^{-\lambda n/2} \\
& \leq 2 \exp ( - c n \sin^2(4 \pi \theta) ) .
\end{align*}
Lemma~\ref{lemma:encoding2} promises that there are at most $2^{n(1-\beta+ \delta)}$ choices for $x$ that does not satisfy~\eqref{2}.

\end{proof}

\section{General Two-Cubes}
\label{sec:UTC}
Now we move to the setting
where the direction $x$ is chosen from 
an arbitrary two-cube $A \subset \Z^n$
with differences $d_1,\ldots,d_n$.
The way we measure the structure
of $A$ follows ideas of Hal{\'a}sz~\cite{halasz1977estimates}.
For an integer $\ell > 0$, define $r_\ell(A)$ to be
the number of elements $(\epsilon, j) \in \{\pm 1\}^{2\ell} \times [n]^{2\ell}$ that satisfy $$\epsilon_{1} \cdot d_{j_1} + \dotsb + \epsilon_{2\ell} \cdot d_{j_{2\ell}} = 0.$$
The smaller $r_\ell(A)$ is, the less structured $A$ is.

The theorem below shows that $r_\ell(A)$ allows us 
to control the concentration probability. More concretely, for $C>0$ and $\ell > 0$, define 
$$R_{C,\ell}(A) = 
 \frac{C^\ell r_\ell(A)}{n^{2\ell+ 1/2}} + \exp(-\tfrac{n}{C} ).$$
Define
$$R_{C}(A) = 
\inf \{  R_{C,\ell}(A) : \ell \in \N \}.$$
This is essentially the bound on the concentration
probability that Hal{\'a}sz obtained in~\cite{halasz1977estimates}
when $Y$ is uniform in $\{\pm 1\}^n$.
Our upper bounds are slightly weaker. Let 
$$\mu_C(A)
= \inf \Big\{
\mu  \in [0,1] : \exists
 \nu \in (0,1] \ \ 
   \mu^{(1+\nu)^2}
  \geq 3  \exp(-\tfrac{ \nu n}{C}) +
 \tfrac{R_{C}(A)}{50 \sqrt{\nu}} \Big\},$$
where we adopt the convention that the infimum of the empty set is~$1$. 
 Before stating the theorem, let us go over the three
 examples from Theorem~\ref{thm:main1}:
\begin{enumerate}
\item
For arbitrary $A$, 
since $r_1(A) \leq O(n^2)$, we get\footnote{Here
and below the big $O$ notation
hides a constant that may depend on $C$.}
$\mu_C(A) \leq O(\tfrac{\sqrt{\ln n}}{\sqrt{n}})$
with $\nu = \tfrac{1}{\ln(1/R_{C,1}(A))}$.
\item When all the differences are distinct,
since $r_1(A) \leq O(n)$, we get
$\mu_C(A) \leq O(n^{-1.5} \sqrt{\ln n})$
with $\nu = \tfrac{1}{\ln(1/R_{C,1}(A))}$.
\item When $\{\pm d_1,\ldots,\pm d_n\}$ is a Sidon set,
since $r_2(A) \leq O(n^2)$,
we get
$\mu_C(A) \leq O(n^{-2.5} \sqrt{\ln n})$
with $\nu = \tfrac{1}{\ln(1/R_{C,2}(A))}$.
\end{enumerate}
More generally, when $R_C(A)$ is bound
from below by some
polynomial in $\tfrac{1}{n}$
then $\mu_C(A)$ is at most $O(R_C(A) \sqrt{ \log (4/R_C(A))})$.


%

\begin{theorem}
\label{thm:mainTechU}
For every $\beta > 0$ and $\delta >0$, 
there is $C>0$ so that the following holds.
Let $B \subseteq \{\pm 1\}^n$ be of size $2^{\beta n}$.
Let $Y$ be uniformly distributed in $B$.
Let $A \subset \Z^n$ be a two-cube.
Then, for all but $2^{n(1-\beta+\delta)}$ directions $x \in A$,
\begin{align*}
\Ex{\theta}{ \left |\Ex{Y}{ \exp(2\pi i \theta \cdot\ip{x}{Y})} \right|}
\leq \mu_C(A) .
\end{align*}
\end{theorem}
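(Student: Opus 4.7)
The plan is to follow the two-step architecture of the hypercube proof (Section~\ref{sec:HC}): a Fourier bound at each fixed angle $\theta$, followed by aggregation over $\theta$. Lemma~\ref{lemma:tech} applies to arbitrary $x \in \Z^n$ unchanged, so for every $x \in A$ and $\eta = 2\pi\theta$,
$$|\E_Y \exp(i\eta\ip{x}{Y})|^2 \leq \E_Y \prod_{j=1}^n \bigl(1 - \gamma_j \sin^2(\phi_j + x_j\eta)\bigr).$$
I would keep the high-entropy coordinate set $J(y) = \{j : \gamma_j(y) \geq \kappa\}$ and redefine the good coordinate set as
$$G(x, y, \eta) = \{j \in J(y) : \sin^2(\phi_j + x_j\eta) \geq \sin^2(\eta d_j)/4\}.$$
The key algebraic input is Claim~\ref{clm:OneGoodChoice}, applied with $(u, v) = (u_j, v_j)$ so that $u - v = d_j$: it says that for each $j \in J(y) \setminus G(x, y, \eta)$, exactly one of the two values $x_j \in \{u_j, v_j\}$ is consistent with $j \notin G$. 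Consequently the encoder/decoder pair in the proof of Lemma~\ref{lemma:encoding2} carries over verbatim (still one bit per coordinate of $G$). This yields, for every fixed $\theta$, at most $2^{n(1-\beta+6\lambda)}$ directions $x \in A$ with $\Pr_Y[|G(x, Y, 2\pi\theta)| \leq \tau n] \geq 2^{-\lambda n}$.

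Next I would aggregate over $\theta$ exactly as in the proof of Theorem~\ref{thm:main}: by the Markov/$S_x$ trick, all but $2^{n(1-\beta+\delta)}$ directions $x \in A$ admit a bad-angle set $S_x \subseteq [0,1]$ of Lebesgue measure at most $2^{-\lambda n}$. For such a good $x$ and $\theta \notin S_x$, the event $|G(x, Y, 2\pi\theta)| \geq \tau n$ holds with probability $\geq 1 - 2^{-\lambda n}$, and on this event the product above is bounded by $\prod_{j \in G}(1 - \kappa \sin^2(2\pi\theta d_j)/4)$; after the routine slack $1 - u \leq e^{-u}$,
$$|\E_Y \exp(2\pi i\theta\ip{x}{Y})|^2 \leq \E_Y \exp\bigl(-\tfrac{\kappa}{4}\,\textstyle\sum_{j \in G(x,Y,2\pi\theta)} \sin^2(2\pi\theta d_j)\bigr) + 2^{-\lambda n}.$$
The final step is to integrate over $\theta$ and recover $\mu_C(A)$. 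Passing to a $(1+\nu)$-th moment via Jensen and expanding $\sin^2 = (1-\cos)/2$ before multiplying out, the $\theta$-integrals of the resulting monomials $\cos(2\pi\theta \sum_k \pm d_{j_k})$ vanish unless the inner signed sum is zero, so the whole quantity is controlled by $r_\ell(A)$; taking the infimum over $\ell$ brings in $R_C(A)$. The exponent $(1+\nu)^2$ in the definition of $\mu_C(A)$ arises from combining this $(1+\nu)$-th moment Jensen step with the fact that Lemma~\ref{lemma:tech} only bounds $|\E_Y \exp|^2$.

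The parameter $\nu$ captures the principal tradeoff: larger $\nu$ strengthens the pointwise exponent but weakens the integrated Hal\'asz estimate by the factor $1/\sqrt\nu$, while the $3\exp(-\nu n/C)$ term absorbs both the $2^{-\lambda n}$ error above and any combinatorial factor of order $\binom{n}{\tau n}$ incurred by replacing the random index set $G$ with a fixed one. This last point is the main obstacle I foresee: $G(x, Y, 2\pi\theta)$ depends jointly on $Y$ and $\theta$, whereas the clean Hal\'asz integral is naturally over a fixed index set. Reconciling the two will likely require either stratifying $\theta$ by the ``heavy'' index set $\{j : \sin^2(2\pi\theta d_j) \geq t\}$ for a well-chosen threshold $t$, or swapping the order of $\E_Y$ and $\int_\theta d\theta$ at an intermediate step so that the Hal\'asz expansion can be applied pathwise. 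With that technical reshuffling the argument mirrors Section~\ref{sec:HC}, the role of the scalar $\sin^2(4\pi\theta)$ in the hypercube case now being played by the Hal\'asz-controlled average of $\sin^2(2\pi\theta d_j)$.
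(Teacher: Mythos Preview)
Your proposal correctly identifies the main obstacle but does not overcome it, and the paper resolves it by a genuinely different mechanism.

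The gap is precisely where you flag it. With a $\theta$-dependent good set $G(x,Y,2\pi\theta)$, neither of your suggested fixes works. Stratifying over the realized index set costs a factor of order $\binom{n}{\tau n}$, which is exponential and destroys any Hal\'asz-type saving. Swapping $\E_Y$ and $\int_\theta$ does not help either: for fixed $x,y$ the set $G$ still varies with $\theta$, and even if you retreat to the full set $J(y)$ you are left with $\sum_{j\in J(y)}\sin^2(\phi_j+2\pi\theta x_j)$ in the exponent. The phases $\phi_j$ are arbitrary (they depend on $B$ and on $y_{<j}$), so the $2\ell$-th moment $\E_\theta[(\sum_j\cos(2\phi_j+4\pi\theta x_j))^{2\ell}]$ does not reduce to a count of solutions of $\sum_i\epsilon_i d_{j_i}=0$; the cross terms carry the $\phi_j$'s and do not vanish, so $r_\ell(A)$ never enters. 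The hypercube proof survives this only because there every $d_j$ equals $2$, so $\sum_{j\in G}\sin^2(2\pi\theta d_j)$ depends on $G$ solely through $|G|$.

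The paper's fix is structural rather than a reshuffling. It argues by contradiction: take $X$ uniform on the putative large bad set $A_0$, and use the randomness of $X$ to eliminate both the $\phi_j$'s and the $\theta$-dependence of $G$. Two new lemmas (Lemmas~\ref{lemma:techU} and~\ref{lemma:techU2}) replace the exponent $2$ in Lemma~\ref{lemma:tech} by $1+\nu$; the case analysis in their proofs hinges on the strict-convexity estimate of Claim~\ref{clm:MinPhi}, which handles the situation where the two conditional contributions differ by a factor of $2$ and no $\sin^2$ gain is directly available. In Lemma~\ref{lemma:techU2} one averages over $X_j\in\{u_j,v_j\}$ and invokes Claim~\ref{clm:OneGoodChoice} to pass from $\sin^2(\phi_j+X_j\eta)$ to $\sin^2(d_j\eta)$, so the resulting good set is $G(x,y)=J'(x)\cap J(y)$, independent of $\theta$. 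One is then left with $\E_\theta[\exp(-c\nu\sum_{j\in G(x,y)}\sin^2(2\pi\theta d_j))]$ over a \emph{fixed} index set; the $2\ell$-th moment bounds $\Pr_\theta[D(\theta)\le\lambda n/4]$ by $r_\ell(A)/(\lambda n)^{2\ell}$, and Kemperman's theorem in $\R/\Z$ bootstraps this to all thresholds $\rho$. The exponent $(1+\nu)^2$ thus records two Jensen steps, one over $Y$ and one over $X$, not a square combined with a single moment as you wrote.
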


Before moving on, we discuss a fourth
extreme example. 
When $A_j = \{2^j,-2^j\}$ for each $j \in [n]$,
we have $r_\ell(A) \leq (2\ell n)^\ell$.
In this case, setting $\ell = \Omega(n)$ gives exponentially small anti-concentration with $\nu=1$.
This result is trivial, but it illustrates that the mechanism
underlying the proof yields strong bound in
many settings.

By ($\star$) and the explanation above,
we see that
Theorem~\ref{thm:mainTechU} implies Theorem~\ref{thm:main1}.
The rest of this section is devoted to the proof
of Theorem~\ref{thm:mainTechU}.
The high-level structure of the proof is similar
to that of Theorem~\ref{thm:tech}.
However, there are several new technical challenges
that we need to overcome. 

The main technical challenge that needs to be overcome has to do with the definition of the set $G$.  
The $G$ defined in the previous section depends on the angle $\theta$.
This is problematic for the proof in the generality 
we are working with now.
So, we need to find a different set of \emph{good} coordinates, one that depends only on $x$ and $y$.
Our solution is based on 
the following claim, which quantifies the strict
convexity of the map $\zeta \mapsto \zeta^{1+\nu}$
for $\nu >0$. We defer the proof to Appendix~\ref{sec:techclaim}.

\begin{claim}\label{clm:MinPhi} 
For every $\kappa > 0$,
there is a constant $c_1 > 0$ so that the following holds.
For every random variable $W \in \nbits$ such that
$$\min \big\{\Pr[W=1], \Pr[W=-1]\big\} \geq \kappa,$$ 
every $\alpha_1
\geq 2 \alpha_{-1}\geq 0$ 
and every $0 < \nu \leq 1$, 
	$$\Expect{\alpha_W}^{1+\nu} \leq (1-c_1 \nu)  \Expect{\alpha_W^{1+\nu}}.$$
\end{claim}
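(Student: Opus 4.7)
The plan is to prove the claim as a quantitative Jensen-type inequality for the strictly convex function $g(t) = t^{1+\nu}$: the left-hand side is $g(\E[\alpha_W])$ and the right-hand side is $(1-c_1\nu)\E[g(\alpha_W)]$, so what I need is a lower bound of order $\nu \cdot \E[g(\alpha_W)]$ on the Jensen gap. First I would reduce to a normalized setup. If $\alpha_1 = 0$ the hypothesis $2\alpha_{-1}\le\alpha_1$ forces $\alpha_{-1}=0$ and both sides vanish. Otherwise both sides are homogeneous of degree $1+\nu$ in $(\alpha_{-1},\alpha_1)$, so I may rescale to $\alpha_1 = 1$ and $0 \le \alpha_{-1} \le 1/2$. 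Writing $p = \Pr[W=1]$, $q = \Pr[W=-1]$, $M = p + q\alpha_{-1}$ and $N = p + q\alpha_{-1}^{1+\nu}$, the goal becomes $M^{1+\nu} \le (1-c_1\nu)N$.

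Next I would apply the standard second-order Jensen bound: for a convex $g$ on $[b,a]$ and $x \in [b,a]$, the inequality $g(y) - g(x) - g'(x)(y-x) \ge \tfrac12\bigl(\inf_{[b,a]} g''\bigr)(y-x)^2$ holds for every $y \in [b,a]$. Applying this with $x=M$ and $y \in \{\alpha_{-1},1\}$ and taking the $\{q,p\}$-weighted combination makes the linear term vanish and yields
\[
N - M^{1+\nu} \;\ge\; \tfrac12\Bigl(\inf_{t\in[\alpha_{-1},1]} g''(t)\Bigr)\cdot pq(1-\alpha_{-1})^2.
\]
Since $g''(t) = \nu(1+\nu)t^{\nu-1}$ and $\nu - 1 \le 0$, the infimum is attained at $t=1$ and equals $\nu(1+\nu) \ge \nu$. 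The hypothesis $\alpha_1 \ge 2\alpha_{-1}$ gives $(1-\alpha_{-1})^2 \ge 1/4$, and $p,q \ge \kappa$ gives $pq \ge \kappa(1-\kappa) \ge \kappa/2$ (necessarily $\kappa \le 1/2$). Altogether, $N - M^{1+\nu} \ge \nu\kappa/16$.

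To conclude, observe that $N \le 1$: since $\alpha_W \in [0,1]$ and $1+\nu \ge 1$, one has $\alpha_W^{1+\nu} \le 1$. Therefore
\[
M^{1+\nu} \;\le\; N - \tfrac{\nu\kappa}{16} \;\le\; \Bigl(1 - \tfrac{\nu\kappa}{16}\Bigr)N,
\]
which is the claim with $c_1 = \kappa/16$. I do not anticipate a serious obstacle. The separation $\alpha_1 \ge 2\alpha_{-1}$ is precisely what prevents the two atoms of $\alpha_W$ from collapsing, so that the quadratic Jensen gap does not degenerate, and $\kappa$ plays the analogous role for the probabilities; both hypotheses enter exactly once, in $pq(1-\alpha_{-1})^2$. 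The only mild care is at $\alpha_{-1}=0$, where $g''$ blows up at the left endpoint, but this is harmless because I only use $\inf g''$ on $[\alpha_{-1},1]$, which is attained at the safe endpoint $t=1$.
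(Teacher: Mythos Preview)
Your argument is correct. The reduction by homogeneity to $\alpha_1=1$, $\alpha_{-1}\in[0,1/2]$ is the same as the paper's, and so is the observation that it suffices to bound the Jensen gap $N - M^{1+\nu}$ from below by a multiple of $\nu$ (since $N\le 1$). From that point on the two proofs diverge. The paper treats $\Phi(\xi,p,\nu)=N-M^{1+\nu}$ as a function of three variables, shows by sign computations that $\partial\Phi/\partial p>0$ and $\partial\Phi/\partial\xi<0$, thereby reducing to the corner $(\xi,p)=(1/2,\kappa)$, and then lower-bounds $\partial\Phi/\partial\nu$ at that corner by a positive constant to integrate up to $\Phi\ge c_1\nu$. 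Your route is more direct: you read off the Jensen gap from the second-order Taylor remainder of $t\mapsto t^{1+\nu}$, obtaining $N-M^{1+\nu}\ge\tfrac12\bigl(\inf_{[\alpha_{-1},1]}g''\bigr)\,\mathrm{Var}(\alpha_W)=\tfrac12\nu(1+\nu)\,pq(1-\alpha_{-1})^2$, and then plug in the hypotheses. This is shorter, avoids the monotonicity analysis entirely, and yields the explicit constant $c_1=\kappa/16$, whereas the paper's constant is implicit. The only point requiring a word of care is the endpoint $\alpha_{-1}=0$, but as you note the Lagrange remainder only evaluates $g''$ at an interior point of $(0,M)$, where it is finite and still bounded below by $g''(1)=\nu(1+\nu)$.
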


%

\subsection{A Single Direction}
\label{sec:fourierboundU}

The following lemma generalizes Lemma~\ref{lemma:tech}.
Recall the definition of $\gamma_j$, $\phi_j$
and $J(y)$ from Sections~\ref{sec:fourierbound}
and~\ref{sec:encoding}.

\begin{lemma} \label{lemma:techU} 
For every $\kappa > 0$,  
there is a constant $c_0 > 0$ so that the following holds. For every $0 < \nu\leq 1$, every angle $\eta \in \R$, every direction $x \in \Z^n$, and every random variable $Y$ over $\{\pm 1\}^n$,
\begin{align*}
	\left |\Ex{Y}{\exp({i \eta \ip{x}{Y}})} \right|^{1+\nu} & \leq \Ex{Y}{
	\prod_{j \in J} (1- c_0 \nu \sin^2( \phi_j + x_j \eta ) }.
\end{align*}
\end{lemma}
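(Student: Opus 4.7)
The plan is to prove the lemma by induction on $n$, closely following the structure of Lemma~\ref{lemma:tech} with the exponent $2$ replaced by $1+\nu$, and with Claim~\ref{clm:MinPhi} playing the role that $a^2+b^2\geq 2ab$ played before. Let $p_\epsilon = \Pr[Y_1 = \epsilon]$ and $Z_\epsilon = \Ex{Y_{>1}|Y_1=\epsilon}{\exp(i\eta\ip{x_{>1}}{Y_{>1}})}$. The definition of $\phi_1$ gives the identity
\begin{align*}
\left|\Ex{Y}{\exp(i\eta\ip{x}{Y})}\right|^2 = (p_1|Z_1|+p_{-1}|Z_{-1}|)^2 - 4 p_1 p_{-1}|Z_1||Z_{-1}|\sin^2(\phi_1+x_1\eta),
\end{align*}
whose right-hand side is non-negative. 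Raising to the $(1+\nu)/2$-th power and using $(1-u)^{(1+\nu)/2}\leq 1-u/2$ for $u\in[0,1]$ (valid since $(1+\nu)/2\in(1/2,1]$) yields
\begin{align*}
\left|\Ex{Y}{\exp(i\eta\ip{x}{Y})}\right|^{1+\nu}\leq (p_1|Z_1|+p_{-1}|Z_{-1}|)^{1+\nu}\cdot\left(1 - \tfrac{2 p_1 p_{-1}|Z_1||Z_{-1}|}{(p_1|Z_1|+p_{-1}|Z_{-1}|)^2}\sin^2(\phi_1+x_1\eta)\right).
\end{align*}

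It remains to bound the first factor by $\Ex{Y_1}{|Z_{Y_1}|^{1+\nu}}$, producing the factor $(1-c_0\nu\sin^2(\phi_1+x_1\eta))$ precisely when $1\in J$, i.e., $\gamma_1\geq\kappa$. I would split into three cases. If $\gamma_1<\kappa$, discard the sine factor and apply Jensen's inequality to the convex map $\zeta\mapsto\zeta^{1+\nu}$; the target product omits $j=1$, so no further factor is required. If $\gamma_1\geq\kappa$ and WLOG $|Z_1|\leq|Z_{-1}|\leq 2|Z_1|$, then $p_1 p_{-1}\geq\kappa(1-\kappa)$ and a short calculation gives $\tfrac{2p_1 p_{-1}|Z_1||Z_{-1}|}{(p_1|Z_1|+p_{-1}|Z_{-1}|)^2}\geq c_2(\kappa):=\kappa(1-\kappa)/2$; combining Jensen with $(1-c_2\sin^2)\leq(1-c_0\nu\sin^2)$, valid for $c_0\leq c_2$ and $\nu\leq 1$, finishes this subcase. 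Finally, if $\gamma_1\geq\kappa$ and WLOG $|Z_1|\geq 2|Z_{-1}|$, apply Claim~\ref{clm:MinPhi} with $W=Y_1$ and $\alpha_\epsilon=|Z_\epsilon|$ to obtain $(p_1|Z_1|+p_{-1}|Z_{-1}|)^{1+\nu}\leq(1-c_1\nu)\Ex{Y_1}{|Z_{Y_1}|^{1+\nu}}$, then drop the sine factor and use $(1-c_1\nu)\leq(1-c_0\nu\sin^2(\phi_1+x_1\eta))$ for $c_0\leq c_1$.

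With $c_0:=\min(c_1,c_2)>0$, all three cases collapse to
\begin{align*}
\left|\Ex{Y}{\exp(i\eta\ip{x}{Y})}\right|^{1+\nu}\leq\Ex{Y_1}{|Z_{Y_1}|^{1+\nu}}\cdot\prod_{j\in J\cap\{1\}}\bigl(1-c_0\nu\sin^2(\phi_1+x_1\eta)\bigr),
\end{align*}
where the product is empty (equal to $1$) when $1\notin J$. Since $\gamma_j$ and $\phi_j$ for $j>1$ depend only on $y_{<j}$, they are identical under the full distribution and under the conditional distribution given $Y_1=\epsilon$; applying the inductive hypothesis inside the outer expectation to each $|Z_\epsilon|^{1+\nu}$ telescopes the product into the claimed bound. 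The main obstacle is producing the decrement $(1-c_0\nu)$ in the ``disparate'' subcase $|Z_1|\geq 2|Z_{-1}|$, where the elementary inequality $a^2+b^2\geq 2ab$ used in Lemma~\ref{lemma:tech} is no longer available; Claim~\ref{clm:MinPhi} is designed precisely to supply this decrement, and its hypothesis $\min(p_1,p_{-1})\geq\kappa$ is exactly why the product in the lemma must be restricted to indices in $J$.
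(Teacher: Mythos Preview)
Your proposal is correct and follows essentially the same approach as the paper: induction on $n$, the identity expressing $|\Ex{Y}{\exp(i\eta\ip{x}{Y})}|^2$ via $\phi_1$, the split into the ``comparable'' case $|Z_1|\asymp|Z_{-1}|$ (handled by $p_1p_{-1}\geq\kappa(1-\kappa)$ plus Jensen) and the ``disparate'' case (handled by Claim~\ref{clm:MinPhi}), with $c_0=\min\{c_1,\kappa(1-\kappa)\}$. The only cosmetic differences are that the paper raises to the $(1+\nu)/2$ power \emph{after} bounding the square in each case rather than before, and disposes of $1\notin J$ in one line rather than treating it as an explicit case.
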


\begin{proof}
The proof is by induction on $n$. If $1 \notin J$, the proof holds by induction. The base case of $n=1$ is trivial.
So assume that $1 \in J$. 
Express
\begin{align*}
\Ex{Y}{\exp(i \eta \ip{x}{Y})}  
& =  p_1 \exp(i \eta x_1) Z_1 + p_{-1} \exp(- i \eta  x_1 ) Z_{-1} ,
\end{align*}
where for $\epsilon \in \{\pm 1\}$, $$p_\epsilon = \Pr[Y_1 = \epsilon]
\qquad \& \qquad Z_{\epsilon} = \Ex{Y|Y_1 =\epsilon}{\exp(i \eta \ip{x_{>1}}{Y_{>1}})}.$$
When $n=1$, we have $Z_1=Z_{-1}=1$.	 
Using the definition of $\phi_1$,
\begin{align*} 
&\left|p_1 \exp(i \eta  x_1) Z_1 + p_{-1} \exp(-i \eta  x_1 ) Z_{-1}\right|^2 \\ 
& = p_1^2 |Z_1|^2 + p_{-1}^2 |Z_{-1}|^2 + p_1 p_{-1} (Z_1 \overline{Z_{-1}} \exp(i2\eta  x_1)+ \overline{Z_1} Z_{-1} \exp(-i2\eta x_1)) \\
&= p_1^2 |Z_1|^2 + p_{-1}^2 |Z_{-1}|^2 + 2 p_1 p_{-1} |Z_1| |Z_{-1}|  \cos(2 \phi_1 + 2 x_1 \eta) \\
&= p_1^2 |Z_1|^2 + p_{-1}^2 |Z_{-1}|^2 + 2 p_1 p_{-1} |Z_1| |Z_{-1}| 
\\ 
& \qquad -2 p_1 p_{-1}  |Z_1| |Z_{-1}| (1-\cos(2 \phi_1 + 2 x_1 \eta)) \\
&= \Expect{|Z_{Y_1}|}^2 - 
4 p_1 p_{-1} |Z_1| |Z_{-1}| \sin^2 ( \phi_1 +  x_1 \eta),
\end{align*} 
Without loss of generality, assume that $|Z_1| \geq |Z_{-1}|$. There are two cases to consider.
The first case is that $Z_1$ and $Z_{-1}$ are comparable in  magnitude: $|Z_1| \leq 2 |Z_{-1}|$. In this case,  we can continue the bound by
\begin{align*} 
& \leq \Expect{|Z_{Y_1}|}^2 - 
2 p_1 p_{-1} |Z_1|^2 \sin^2 ( \phi_1 +  x_1 \eta) \\
& \leq \Expect{|Z_{Y_1}|}^2 (1 - 
2 \kappa (1-\kappa)  \sin^2 ( \phi_1 +  x_1 \eta) ) ,
\end{align*} 
since $1 \in J$. This gives
\begin{align*} 
&\left|p_1 \exp(i \eta  x_1) Z_1 + p_{-1} \exp(-i \eta  x_1 ) Z_{-1}\right|^{1+\nu} \\ 
& \leq \Expect{|Z_{Y_1}|}^{1+\nu}(1 - 
2 \kappa (1-\kappa)  \sin^2 ( \phi_1 +  x_1 \eta) )^{(1+\nu)/2}\\
& \leq \Expect{|Z_{Y_1}|^{1+\nu}}(1 - 
 \kappa (1-\kappa)  \sin^2 ( \phi_1 +  x_1 \eta) ),
\end{align*} 
since the map $\zeta \mapsto \zeta^{1+\nu}$ is convex.

The second case is when $|Z_1| > 2|Z_{-1}|$.
Recall that we have already shown
\begin{align*} 
&\left|p_1 \exp(i \eta  x_1) Z_1 + p_{-1} \exp(-i \eta  x_1 ) Z_{-1}\right|^2 \\ 
&= \Expect{|Z_{Y_1}|}^2 - 
4 p_1 p_{-1} |Z_1| |Z_{-1}| \sin^2 ( \phi_1 +  x_1 \eta) \\
& \leq \Expect{|Z_{Y_1}|}^2 . 
\end{align*}
Claim \ref{clm:MinPhi} implies that 
\begin{align*} 
&\left|p_1 \exp(i \eta  x_1) Z_1 + p_{-1} \exp(-i \eta  x_1 ) Z_{-1}\right|^{1+\nu} \\ 
& \leq \Expect{|Z_{Y_1}|}^{1+\nu} \\
& \leq (1-c_1 \nu)  \cdot \Expect{|Z_{Y_1}|^{1+\nu}} \\
& \leq (1-c_1 \nu \sin^2( \phi_j + x_j \eta ) )   \cdot \Expect{|Z_{Y_1}|^{1+\nu}}.
\end{align*} 

Finally, setting $c_0 = \min\{c_1, \kappa (1-\kappa)\}$, we get a bound that applies in both cases:
\begin{align*} 
\left |\Ex{Y}{\exp({i \eta \ip{x}{Y}})} \right|^{1+\nu}
& \leq (1-c_0 \nu \sin^2( \phi_j + x_j \eta ) ) \cdot   \Expect{|Z_{Y_1}|^{1+\nu}}.
\end{align*}
This proves the base case of the induction and 
also allows to perform the inductive step.

\end{proof}

\subsection{An Average Direction} 
\label{sec:fourierboundAverageU}
In this section we analyze
the bound from the previous section for an average direction $X$
in a two-cube $A \subset \Z^n$.
This step has no analogy in the proof
of Theorem~\ref{thm:tech}.
To compute the expectation over an average direction, we reveal the entropy of $X$ coordinate by coordinate
in reverse order (from the $n$'th coordinate to the first one).

In analogy with $\gamma_1, \dotsc, \gamma_n$, define
the following functions $\mu_1,\ldots,\mu_n$.
For each $j \in [n]$, let
$$\mu_j(x) = \mu_j(x_{>j}) 
= \min_{\epsilon \in A_j} \Pr[X_j = \epsilon | X_{>j}= x_{>j}];$$
this is well-defined for $x$ in $A = \text{supp}(X)$.
In analogy \an{with} the definition of $J(y)$,
let 
$$J'(x) = \{j \in [n]: \mu_j(x) \geq \kappa\}.$$
In this section, we define the set $G$ differently, but use the same notation. Let 
$$ G(x,y) = G_{A,B,\kappa}(x,y) =  J'(x) \cap J(y).$$
Recall that $\gamma_j$, $\phi_j$
and $J(\cdot)$ depend on the set $B$,
on $y \in B$ and on $x \in \Z^n$.
In the following lemma, we fix an arbitrary $y \in B$, and take the expectation over a random $X \in A$.
We allow $G$ to be a random set that depends on $X$, and
$\phi_j$ to be a random variable
that depends on $X_{>j}$.

\begin{lemma} \label{lemma:techU2} 
For every $\kappa > 0$ and $0 < c_0 \leq 1$,  
		there is a constant $c > 0$ so that the following holds. For every $0 < \nu\leq 1$, every angle $\eta \in \R$, 
		every $B \subseteq \{\pm 1\}^n$,
		every $y \in B$,
every random variable $X$ taking values in a two-cube $A\subseteq \Z^n$ with differences $d_j = u_j - v_j$, 
		\begin{align*}
		\Ex{X}{\prod_{j \in J} (1-c_0 \nu \sin^2(\phi_j + X_j \eta))}^{1+\nu} & \leq \Ex{X}{\exp\Big(- c \nu \sum_{j \in G} \sin^2(d_j \eta) \Big)}.
		\end{align*}
\end{lemma}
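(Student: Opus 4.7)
My plan is to first reduce the power $(1+\nu)$ on the left-hand side to a simpler statement, and then prove the simpler statement by peeling $X$ one coordinate at a time. The reduction uses that each factor in the product lies in $[0,1]$ (assuming $c_0\nu\leq 1$), so $Z := \prod_{j\in J}(1-c_0\nu\sin^2(\phi_j + X_j\eta))$ satisfies $Z\leq 1$ pointwise, hence $\E_X Z \leq 1$ and therefore $(\E_X Z)^{1+\nu}\leq \E_X Z$. Thus it suffices to prove the cleaner inequality
\[
\E_X \prod_{j\in J} f_j \;\leq\; \E_X \exp\Big(-c\nu \sum_{j\in G} \sin^2(d_j\eta)\Big),
\]
for some $c>0$ depending on $\kappa$ and $c_0$; the statement of the lemma then follows with the same $c$.

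For the simpler inequality I would peel $X_1,X_2,\ldots,X_n$ in order (innermost conditional expectation first). At step $j$, I use the identity $f_j = 1-c_0\nu\sin^2(\phi_j+X_j\eta)$ together with the observation that $\phi_j$ and $\mu_j$ are functions of $X_{>j}$. Conditioning on $X_{>j}$, the factors $f_k$ for $k>j$ are fixed and we need
\[
\E_{X_j\mid X_{>j}} f_j^{\mathbf{1}[j\in J]} \;\leq\; \exp\bigl(-c'\nu \sin^2(d_j\eta)\,\mathbf{1}[j\in G(X,y)]\bigr),
\]
with the indicator on the right a function of $X_{>j}$. When $j\in G$, both values of $X_j$ occur with conditional probability at least $\kappa$, and Claim \ref{clm:OneGoodChoice} implies that $\max\{\sin^2(\phi_j+u_j\eta),\sin^2(\phi_j+v_j\eta)\}\geq \sin^2(d_j\eta)/4$. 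Using $p_+ a + p_- b \geq \kappa\cdot\max(a,b)$ when $\min(p_+,p_-)\geq\kappa$, together with $1-t\leq e^{-t}$ and $1-e^{-t}\geq t/2$ for $t\in[0,1]$, yields the displayed bound with $c' = \kappa c_0/8$.

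The main obstacle is the iteration: the exponential factor $\exp(-c'\nu\sin^2(d_1\eta)\mathbf{1}[1\in G])$ acquired at step $1$ depends on $X_{>1}$, so when we peel $X_2$ the indicator $\mathbf{1}[1\in G] = \mathbf{1}[\mu_1(X_2,X_{>2})\geq\kappa]\cdot\mathbf{1}[1\in J(y)]$ may take different values at $X_2=u_2$ versus $X_2=v_2$, so the accumulated bound does not factor cleanly across coordinates. I plan to overcome this with a strengthened induction on $n$: I will prove that for every non-negative function $G_0(X_{>1})$,
\[
\E_X\, G_0(X_{>1})\,\prod_{j\in J} f_j \;\leq\; \E_X\, G_0(X_{>1})\,\exp\Big(-c\nu\sum_{j\in G}\sin^2(d_j\eta)\Big),
\]
and then close the induction by replacing $G_0$ with $G_0\cdot H_1^{\text{upper}}$ after peeling $X_1$. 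In the inductive step, when the new multiplicative factor is imbalanced across the two choices of the next coordinate, I would invoke Claim \ref{clm:MinPhi}: the strict convexity of $\zeta\mapsto \zeta^{1+\nu}$ produces the $(1-c_1\nu)$ savings precisely needed to absorb the imbalance without losing the accumulated factors from previous steps. The hardest part is verifying that these $(1-c_1\nu)$ savings exactly compensate the loss introduced when the indicator $\mathbf{1}[j\in G]$ is sensitive to not-yet-integrated coordinates.
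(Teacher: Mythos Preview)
Your first reduction is where the argument breaks. You use $Z\le 1$ to replace $(\E_X Z)^{1+\nu}$ by $\E_X Z$ and then aim for the ``cleaner'' inequality; but the $(1+\nu)$ power is not a nuisance to be discarded --- it is the only mechanism that produces the savings in the imbalanced case. Claim~\ref{clm:MinPhi} says $\E[\alpha_W]^{1+\nu}\le(1-c_1\nu)\,\E[\alpha_W^{1+\nu}]$; without the exponent $1+\nu$ on the left there is no inequality at all, so your later appeal to it is vacuous once you have passed to the linear statement.

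Concretely, your strengthened induction hypothesis with an arbitrary weight $G_0$ does not close. After integrating $X_1$ you must replace $G_0(X_{>1})$ by $G_0(X_{>1})\cdot\exp(-c'\nu\sin^2(d_1\eta)\,\mathbf{1}[1\in G])$, and $\mathbf{1}[1\in G]$ depends on $X_2$ through $\mu_1(X_{>1})$; so the new weight is a function of $X_2$ as well as $X_{>2}$, and is no longer of the form your hypothesis allows for the $(n-1)$-dimensional problem. If instead you allow $G_0$ to depend on the coordinate currently being integrated, the one-step bound is simply false: when $j\in G$ and the weight concentrates on the value $a\in A_j$ with $\sin^2(\phi_j+a\eta)=0$, the left side is essentially $\E_{X_j}[G_0]$ while the right side is $\exp(-c'\nu\sin^2(d_j\eta))\,\E_{X_j}[G_0]$, strictly smaller. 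This is exactly the imbalance you flag as ``the hardest part'', and there is no $(1+\nu)$ left to fix it.

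The paper's proof avoids this by peeling the \emph{last} coordinate first in the outer expectation and keeping the $(1+\nu)$ power throughout. With $Z_a=\E_{X\mid X_n=a}\big[\prod_{j\in J,\,j<n}(1-c_0\nu\sin^2(\phi_j+X_j\eta))\big]$, one bounds $(\E_{X_n}[(1-c_0\nu\sin^2(\phi_n+X_n\eta))Z_{X_n}])^{1+\nu}$ by a case split on whether $Z_u\le 2Z_v$ or $Z_u>2Z_v$. In the balanced case, Claim~\ref{clm:OneGoodChoice} together with $\mu_n\ge\kappa$ yields a factor $1-\Theta(\kappa c_0\nu)\sin^2(d_n\eta)$, and convexity pushes the $(1+\nu)$ inside to get $\E_{X_n}[Z_{X_n}^{1+\nu}]$. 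In the imbalanced case one drops the $f_n$ factor (it is at most $1$), and Claim~\ref{clm:MinPhi} gives $\E_{X_n}[Z_{X_n}]^{1+\nu}\le(1-c_1\nu)\,\E_{X_n}[Z_{X_n}^{1+\nu}]$. Either way one arrives at $\exp(-c\nu\sin^2(d_n\eta))\,\E_{X_n}[Z_{X_n}^{1+\nu}]$, and now induction applies directly to each $Z_a^{1+\nu}$ with no accumulated weight. The exponent $1+\nu$ is what makes the imbalanced branch go through; you need to keep it.
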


\begin{proof}
The proof is by induction on $n$. 
Recall that
$\phi_j$ and $\mu_j$ is determined by $x_{>j}$.
In particular, whether or not $n \in G(x,y)$ 
does not depend on $x$.
If $n \notin G(x,y)$, the proof holds by induction, or is trivially true for $n=1$. 
So assume that $n \in G(x)$. 
Start with
\begin{align*}
 	& \Ex{X}{\prod_{j \in J} (1- c_0 \zeta \sin^2( \phi_j + X_j \eta ))} \\
	 	& = \Ex{X_{n}}{(1- c_0 \zeta \sin^2( \phi_n + X_n \eta )) Z_{X_n}  },	\
		\end{align*}
where for $a \in A_n : = \{u,v\}$,
$$Z_a = \Ex{X|X_{n}=a}{
		\prod_{j \in J:j<n} (1- c_0 \sin^2( \phi_j + X_j \eta ))}.$$
		If $n=1$, then $Z_u = Z_v = 1$.
Assume without loss of generality 
		that $Z_u \geq Z_v$.
There are two cases to consider.
The first case is that $Z_u > 2 Z_v$.
In this case, Claim~\ref{clm:MinPhi} implies
\begin{align*}
 	 \Ex{X}{\prod_{j \in J} (1- c_0 \nu \sin^2( \phi_j + X_j \eta ))}^{1+\nu}  &\leq
		\Expect{Z_{X_n}}^{1+\nu} \\
		&\leq (1- c_1 \nu) \Expect{Z_{X_n}^{1+\nu}}\\
		&\leq \exp(- c_1 \nu) \Expect{Z_{X_n}^{1+\nu}}.
		\end{align*} 
The second case is when $Z_u \leq 2 Z_v$. 
By Claim~\ref{clm:OneGoodChoice}, 
$$\max \big\{ |\sin(\phi_n + u \eta)| , |\sin(\phi_n + v \eta)|
\big\} \geq \tfrac{\sin(d_n \eta)}{2}.$$ 
Since $\mu_n(x) \geq \kappa$,
\begin{align*}
 &\Ex{X_{n}}{(1- c_0 \nu \sin^2( \phi_n + X_n \eta )) Z_{X_n}  }^{1+\nu} \\
 & \leq (\Ex{X_{n}}{Z_{X_n}  } - \kappa c_0 \nu \tfrac{\sin^2(d_n \eta)}{4} \tfrac{Z_{u}}{2})^{1+\nu}\\
 & \leq (\Ex{X_{n}}{Z_{X_n}} (1 -  \tfrac{\kappa  c_0 \nu}{8} \sin^2(d_n \eta)))^{1+\nu}\\
 & \leq \Ex{X_{n}}{Z^{1+\nu}_{X_n}}  \exp( - \tfrac{c_0 \kappa \nu}{8} \sin^2(d_n \eta)).
\end{align*}

In both cases, 
$$ \Ex{X}{\prod_{j \in J} (1- c_0 \sin^2( \phi_j + X_j \eta ))}^{1+\nu} \leq \exp(-c \nu \sin^2(d_n \eta))  \Ex{X_n}{Z_{X_n}^{1+\nu}},$$ for some constant $c(\kappa,c_0) >0$.
This proves the base case of the induction and 
also allows to perform the inductive step.

\end{proof}

\subsection{Putting It Together}
\label{sec:mainPfU}

\begin{proof}[Proof of Theorem \ref{thm:mainTechU}]
Let $\mu >0$ and $0 < \nu \leq 1$
be so that 
  $$ \mu^{(1+\nu)^2}
  \geq 3  \exp(-\tfrac{ \nu n}{C}) +
 \tfrac{R_C(A)}{50 \sqrt{\nu}};$$
if no such $\mu,\nu$ exist then the theorem is trivially true.
Let $$A_0 = \Big\{ x \in A : 
\Ex{\theta}{ \Big |\Ex{Y}{ \exp(2\pi i \theta \cdot\ip{x}{Y})} \Big|}
 \geq \mu \Big\}.$$
Denote the size of $A_0$ by $2^{\alpha n}$.
Assume towards a contradiction that 
$\alpha +\beta \geq 1+\delta$.
Let $X$ be uniformly distributed in $A_0$, independently of $Y$ and $\theta$.
Let $\lambda = \tfrac{\delta}{7}$,
and let $\kappa$ be as in~\eqref{1}.
By Lemma~\ref{lemma:techU},
		\begin{align*}
& \Ex{X,\theta}{\Big|\Ex{Y}{\exp({i 2\pi \theta \ip{x}{Y}})}\Big|}^{(1+\nu)^2} \\
& \leq \Ex{X,\theta}{\Big|\Ex{Y}{\exp({i 2\pi \theta \ip{x}{Y}})}\Big|^{1+\nu}}^{1+\nu} \\
& \leq \Ex{X,\theta}{\Ex{Y}{
	\prod_{j \in J} (1- c_0 \nu \sin^2( \phi_j + x_j 2 \pi \theta ) }
}^{1+\nu} .
\end{align*}
By Lemma~\ref{lemma:techU2}, we can continue
		\begin{align*}
& = \Ex{Y,\theta}{\Ex{X}{
	\prod_{j \in J} (1- c_0 \nu \sin^2( \phi_j + x_j 2 \pi \theta ) }}^{1+\nu} \\
& \leq \Ex{Y,\theta}{\Ex{X}{
	\prod_{j \in J} (1- c_0 \nu \sin^2( \phi_j + x_j 2 \pi \theta ) }^{1+\nu}} \\
& \leq \Ex{X,Y,\theta}{\exp (- c \nu D(\theta))} ,
\end{align*}
where
$$D(\theta) = D_{x,y}(\theta) = \sum_{j \in G(x,y)}  \sin^2( 2 \pi \theta d_j).$$
%
%
By Lemma~\ref{lemma:encoding1}, $|J(y)| > n(\beta - 3 \lambda)$ for all but $2^{n(\beta - 2 \lambda)}$ choices for $y$. Similarly, $|J'(x)|> n(\alpha - 3 \lambda)$ for all but $2^{n(\alpha - 2 \lambda)}$  choices of $x$. 
By assumption,
$\beta - 3 \lambda + \alpha - 3 \lambda \geq \lambda$.
Since $|G(x,y)| \geq |J(y)| + |J'(x)| - n$, 
\begin{align*}
&\Pr [  |G(X,Y)| \leq \lambda n ]  \\
& \leq \Pr[|J(Y)| \leq n(\beta - 3 \lambda)] + \Pr[|J'(X)| \leq n(\alpha - 3 \lambda)]\\
& \leq 2^{-2\lambda n}+ 2^{-2\lambda n}.
\end{align*}


\begin{claim*} 
Let $x,y$ be so that $G(x,y) \geq \lambda n$.
For every $0 \leq \rho\leq \tfrac{\lambda n}{4}$
and integer $\ell > 0$,
	$$\Pr_\theta  [D(\theta) < \rho ] \leq \frac{4 r_\ell(A)}{( \lambda n)^{2\ell+1/2}}   \sqrt{\rho} .$$
\end{claim*}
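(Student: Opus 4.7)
The plan is to bound $\Pr_\theta[D(\theta)<\rho]$ by combining a moment bound on the trigonometric polynomial
$$T(\theta) := \sum_{j\in G(x,y)} \cos(4\pi\theta d_j) = m - 2D(\theta), \qquad m := |G(x,y)| \ge \lambda n,$$
with an integration identity that extracts the $\sqrt\rho$ factor. The first step is the standard $(2\ell)$-th moment bound: expanding $\cos\alpha = \tfrac12(e^{i\alpha}+e^{-i\alpha})$ in $T^{2\ell}$ and using orthogonality $\int_0^1 e^{4\pi i\theta M}\,d\theta = \mathbf{1}[M=0]$, the integral collapses to a count of signed zero sums,
$$\int_0^1 T(\theta)^{2\ell}\,d\theta = \frac{1}{2^{2\ell}}\,\#\Big\{(j,\epsilon)\in G^{2\ell}\!\times\!\{\pm 1\}^{2\ell}:\sum_{k=1}^{2\ell}\epsilon_k d_{j_k}=0\Big\} \le \frac{r_\ell(A)}{2^{2\ell}},$$
where I use $G\subseteq[n]$ to bound the count by the definition of $r_\ell(A)$.

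To introduce the $\sqrt\rho$ factor I apply the elementary pointwise inequality $\mathbf{1}[D<\rho] \le \sqrt{2\rho/(D+\rho)}$, which follows from $D<\rho\Rightarrow D+\rho<2\rho$. Integrating gives
$$\Pr_\theta[D<\rho]\ \le\ \sqrt{2\rho}\int_0^1 (D(\theta)+\rho)^{-1/2}\,d\theta.$$
I will bound the remaining integral by a dyadic decomposition of $[0,1]$ in the value of $D$: for each $k\ge 0$ the shell $\{D+\rho\in[2^k\rho,2^{k+1}\rho)\}$ has integrand bounded by $(2^k\rho)^{-1/2}$ and measure equal to $\Pr_\theta[D<(2^{k+1}-1)\rho] = \Pr_\theta[T>m-(2^{k+2}-2)\rho]$. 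For those $k$ with $2^{k+2}\rho\le m/2$, which are the relevant ones since $\rho\le\lambda n/4\le m/4$, Markov applied to the moment bound of the first step controls the measure by $2^{2\ell}r_\ell(A)/m^{2\ell}$. Summing the resulting geometric series in $(2^k\rho)^{-1/2}$ yields $\int_0^1(D+\rho)^{-1/2}\,d\theta \lesssim r_\ell(A)/(m^{2\ell}\sqrt\rho)$, and multiplying by $\sqrt{2\rho}$ gives a bound of the claimed form.

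The main technical obstacle is recovering the full $1/\sqrt m$ improvement that sharpens the exponent from the moment-bound's $2\ell$ to the desired $2\ell + 1/2$. The moment bound alone delivers only $r_\ell(A)/m^{2\ell}$, so the entire extra $m^{-1/2}$ must come from the $(D+\rho)^{-1/2}$ weight inserted at the indicator step. The delicate part is the tail of the dyadic sum, namely the shells where $2^k\rho$ is of order $m$ and the moment bound stops being sharper than the trivial estimate $(D+\rho)^{-1/2}\le m^{-1/2}$; absorbing this tail into the main term uses the unconditional lower bound $r_\ell(A)\ge (2\ell)!\,n^\ell/\ell!$ coming from diagonal pairings $j_{2k-1}=j_{2k}$ with $\epsilon_{2k-1}=-\epsilon_{2k}$, and careful tracking of the constants across the shells produces the factor of $4$ in the statement.
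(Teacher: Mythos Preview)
Your moment computation is correct and is exactly the paper's first step: expanding $T(\theta)^{2\ell}$ and counting zero sums gives $\mathbb{E}_\theta[(|G|-2D)^{2\ell}]\le 2^{-2\ell}r_\ell(A)$, whence Markov yields $\Pr_\theta[D\le \lambda n/4]\le r_\ell(A)/(\lambda n)^{2\ell}$.

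The gap is in the second half. The pointwise inequality $\mathbf 1[D<\rho]\le\sqrt{2\rho/(D+\rho)}$ and the dyadic decomposition are fine, but they cannot manufacture the extra factor $\sqrt{\rho/m}$. Concretely, for every shell with $2^{k+1}\rho\le m/4$ your only upper bound on its measure is $\Pr[D<2^{k+1}\rho]\le r_\ell(A)/m^{2\ell}$, \emph{independent of $k$}; summing $(2^k\rho)^{-1/2}\cdot r_\ell(A)/m^{2\ell}$ gives $\int(D+\rho)^{-1/2}\lesssim \rho^{-1/2}r_\ell(A)/m^{2\ell}$, and after multiplying by $\sqrt{2\rho}$ you are back to $\Pr[D<\rho]\lesssim r_\ell(A)/m^{2\ell}$, with no $\sqrt\rho$. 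The tail shells (those with $2^k\rho\sim m$) contribute $\lesssim m^{-1/2}$ to the integral, hence $\lesssim\sqrt{\rho/m}$ after multiplying by $\sqrt\rho$; this term has the right shape in $\rho$ but no $r_\ell(A)/m^{2\ell}$ factor, and the lower bound $r_\ell(A)\gtrsim n^\ell$ is far too weak to absorb it into $r_\ell(A)\sqrt\rho/m^{2\ell+1/2}$ (that would require $r_\ell(A)\gtrsim m^{2\ell}$). So your argument yields at best $\Pr[D<\rho]\lesssim r_\ell(A)/m^{2\ell}+\sqrt{\rho/m}$, and neither summand is dominated by the claimed bound in general.

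The missing idea is additive-combinatorial, not analytic. The paper observes that $|\sin(\eta_1+\cdots+\eta_m)|\le\sum|\sin\eta_i|$ implies $D(\theta_1+\cdots+\theta_m)\le m^2\max_i D(\theta_i)$, so the $m$-fold sumset of $S_\rho=\{\theta:D(\theta)\le\rho\}$ in $\R/\Z$ is contained in $S_{m^2\rho}$. Kneser/Kemperman then gives $|S_{m^2\rho}|\ge m|S_\rho|$ whenever $S_{m^2\rho}\neq\R/\Z$. Choosing $m$ maximal with $m^2\rho\le\lambda n/4$ transfers the moment bound at level $\lambda n/4$ down to level $\rho$, picking up exactly the factor $1/m\asymp\sqrt{\rho/(\lambda n)}$. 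This sumset step is what produces the $\sqrt\rho$ in the claim; no amount of reweighting the single moment inequality will do it.
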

Given the claim, 
for every $x,y$ so that $G(x,y) \geq \lambda n$
and $\ell > 0$,
\begin{align*}
& \Ex{\theta}{ \exp (- c \nu D(\theta) ) } \\
& =
\int_0^1 \Pr_{\theta} [
 \exp (- c \nu D(\theta) ) > t  ] \, \mathrm{d} t
\\
& \leq \exp(- \tfrac{c \nu \lambda n}{4}) + 
\int_{\exp(- c \nu \lambda n /4)}^1 \Pr_{\theta} [
   D(\theta) < - \tfrac{\ln t}{c\nu} ] \, \mathrm{d} t
\\
& \leq \exp(-\tfrac{c \nu \lambda n}{4}) + 
\frac{4 r_\ell(A)}{(\lambda n)^{2\ell+1/2}}  \int_{0}^1 
 \sqrt{- \tfrac{ \ln t}{ c \nu}}
 \, \mathrm{d} t .
\end{align*}
The integral 
$\int_{0}^1 
 \sqrt{- \ln t}
 \, \mathrm{d} t \leq 1$ converges to a constant. 
For an appropriate $C = C(\beta,\delta) >0$
and $\ell >0$,
we get the desired contradiction.
 \begin{align*}
 \mu^{(1+\nu)^2}
 & \leq 2 \cdot 2^{-2 \lambda n} + 
 \exp(-\tfrac{c \nu \lambda n}{4}) +
\frac{4 r_\ell(A)}{\sqrt{c \nu} (\lambda n)^{2\ell+1/2}} \\
 & < 3  \exp(-\tfrac{ \nu n}{C}) +
 \tfrac{R_C(A)}{50 \sqrt{\nu}} .
 \end{align*}

\begin{proof}[Proof of Claim]
Let $G = G(x,y)$.
Observe that
\begin{align*}
& \Ex{\theta}{  ( |G|-2D(\theta) )^{2\ell} } \\
&= \Ex{\theta}{  \Big( \sum_{j \in G} \cos(4\pi d_j \theta)\Big)^{2\ell} }\\
&= 2^{-2\ell} \Ex{\theta}{   \Big( \sum_{j \in G} \exp(4\pi i d_j \theta)+ \exp(- 4\pi i  d_j \theta)\Big)^{2\ell} } \\
& \leq 2^{-2\ell} r_\ell(A) ;
\end{align*}
the last equality follows from the fact that of the $\leq (2|G|)^{2\ell}$ terms in the expansion, the only ones that survive are the ones with phase $0$. There are at most $r_\ell(A)$ such terms, and each contributes $1$. 

By Markov's inequality, since $|G| \geq \lambda n$,
\begin{align*}
\Pr_\theta  [D(\theta) \leq \tfrac{\lambda n}{4}  ] 
& \leq \Pr_\theta \Big [ (|G|-2D(\theta) )^{2\ell} \geq (\tfrac{\lambda n}{2})^{2\ell} \Big] \leq \frac{2^{-2\ell} r_\ell(A)}{(\lambda n/2)^{2\ell}} 
= \frac{r_\ell(A)}{(\lambda n)^{2\ell}}.
\end{align*}
This proves the claim for $\rho = \tfrac{\lambda n}{4}$.

It remains to prove the claim
for $\rho < \tfrac{\lambda n}{4}$.
This part uses Kemperman's theorem~\cite{kemperman}
from group theory (in fact Kneser's theorem~\cite{kneser}
for abelian groups suffices).
Think of $[0,1)$ as the group $\R/\Z$.
Let 
$$S_\rho =  \{\theta \in \R/\Z : D(\theta) \leq \rho \}.$$

We claim that the $m$-fold sum $S_\rho+S_\rho+\dotsb+S_\rho
\subseteq \R/\Z$
is contained in $S_{\rho m^2}$. Indeed, 
\begin{align*}
|\sin(\eta_1+ \eta_2)| & = |\sin(\eta_1) \cos(\eta_2) + \sin(\eta_2) \cos(\eta_1)|\\
&\leq |\sin(\eta_1)|+ |\sin(\eta_2)|,
\end{align*}
and so 
\begin{align*}
\sin^2(\eta_1+\dotsb+ \eta_m) & \leq  (|\sin(\eta_1)|+ \dotsb + |\sin(\eta_m)|)^2\\ &\leq m  (\sin^2(\eta_1)+ \dotsb + \sin^2(\eta_m)) .
\end{align*}
It follows that
\begin{align*}
 D(\theta_1+\theta_2 + \cdots + \theta_m)
& \leq m (D(\theta_1) + D(\theta_2)+\cdots + D(\theta_m)) \\
& \leq m^2 \max \{D(\theta_1) , D(\theta_2),
\ldots,D(\theta_m)\} .
\end{align*}

Kemperman's theorem thus implies that 
$$|S_{\rho m^2}| \geq
|S_\rho+\dotsb+S_\rho| \geq m |S_\rho|,$$
as long as $S_{\rho m^2}$ is not
all of $\R/\Z$.
Since
\begin{align*}
\Ex{\theta}{D(\theta)}
= \sum_{j \in G} \Ex{\theta}{\sin^2(2 \pi \theta d_j)}
= \frac{|G|}{2} ,
\end{align*}
we can deduce that $|S_{\lambda n/4}| = \Pr_{\theta}[D(\theta) \leq \tfrac{\lambda n}{4}]$
is strictly less than one.
Hence, $S_{\lambda n/4}$ is not the full group $\R/\Z$.
Setting $m$ to be the largest integer so that $m^2\rho \leq \tfrac{\lambda n}{4}$, we can conclude
\begin{align*}
\Pr_\theta   [D(\theta) \leq \rho  ] &\leq \tfrac{1}{m} \Pr_\theta   [D(\theta) \leq \rho m^2  ] 
\leq \tfrac{1}{m} \Pr_\theta   [D(\theta) \leq \tfrac{\lambda n}{4}  ] .
\qedhere \end{align*}
\end{proof}
\end{proof}

\subsubsection*{Acknowledgements}
We wish to thank James Lee, Oded Regev, Avishay Tal
and David Woodruff for helpful conversations.

%

\appendix

\section{Strict Convexity} \label{sec:techclaim}
\begin{proof}[Proof of Claim \ref{clm:MinPhi}]
If $\alpha_1 =0$, then the claim is trivially true. So, 
assume that $\alpha_1 >0$.  Without loss of generality, we may also assume that $\kappa>0$ is small enough so that $4^\kappa > \exp(\kappa + \kappa^2)$. 

Let $p = \Pr[W = 1] \in [\kappa, 1-\kappa]$ and $\xi = \tfrac{\alpha_{-1}}{\alpha_1} \in [0,\tfrac{1}{2}]$. 
So, 
\begin{align*}
\frac{\Expect{\alpha_W}^{1+\nu}}{\Expect{\alpha_W^{1+\nu}}} 
& = \frac{(p+ (1-p) \xi)^{1+\nu}}{p  + (1-p) \xi^{1+\nu}}. 
\end{align*}
We need to upper bound this ratio by $1-c_1 \nu$, for some constant $c_1$ that depends only on $\kappa$. 
Let
	$$\Phi(\xi,p,\nu)
	= (p+(1-p)\xi^{1+\nu}) - (p+(1-p)\xi)^{1+\nu}.$$
	We shall argue that there is a constant $c_1 = c_1(\kappa)>0$ such that $\Phi(\xi,p,\nu) \geq  c_1\nu$. This completes the proof, since 
	\begin{align*}
	&\frac{(p + (1-p) \xi)^{1+\nu}}{(p + (1-p) \xi^{1+\nu})} = 1 - \frac{\Phi(\xi,p,\nu)}{(p + (1-p) \xi^{1+\nu})} < 1-c_1\nu.
	\end{align*}

First,	we show that for every $\nu$ and $\xi$, 
	the function $\Phi(\xi,p,\nu)$ is minimized when $p = \kappa$. Consider
	\begin{align*}
	\frac{\partial \Phi}{\partial p} & = 1 - \xi^{1+\nu} - (1+\nu) (p+(1-p)\xi)^{\nu} (1-\xi) \\
	&\geq 1- \xi^{1+\nu}- (1+\nu) (1-\xi)\\
	&\geq \xi (1+\nu - \xi^{\nu})>0,
	\end{align*}
	since $\xi^{\nu}<1$.
	So, the minimum is achieved when $p = \kappa$.

Second, we claim that for every $\nu$ and $p$, 
	the function $\Phi(\xi,p,\nu)$ is minimized when $\xi = \tfrac{1}{2}$. Consider
	\begin{align*}
	\frac{\partial \Phi}{\partial \xi} & = (1-p)(1+\nu)\xi^{\nu} - (1+\nu)(p+(1-p)\xi)^{\nu} (1-p)\\
	&= (1-p)(1+\nu) (\xi^\nu - (p+ (1-p)\xi)^\nu) <0,
	\end{align*}
	since $p+ (1-p)\xi > \xi$. So, the minimum is achieved when $\xi = 1/2$. 

Third, we control the derivative with respect to $\nu$
for $\xi = \tfrac{1}{2}$ and $p = \kappa$.
Consider
	\begin{align*}
	\frac{\partial  \Phi}{\partial \nu}(\tfrac{1}{2},\kappa,\nu) &=  (1-\kappa) \ln(\tfrac{1}{2}) (\tfrac{1}{2})^{1+\nu} - \ln (\tfrac{1+\kappa}{2}) (\tfrac{1+\kappa}{2})^{1+\nu} \\
	& \geq (\tfrac{1}{2})^2 ((1-\kappa) \ln(\tfrac{1}{2}) - \ln (\tfrac{1+\kappa}{2}) (1+\kappa)^{1+\nu}),  
	\end{align*}since $\nu \leq 1$. The expression 
	$$(1-\kappa) \ln(\tfrac{1}{2}) - \ln (\tfrac{1+\kappa}{2}) (1+\kappa)^{1+\nu}$$ 
	only increases with $\nu$. When $\nu=0$, this expression is
	\begin{align*}
\ln(\tfrac{2^{2\kappa}}{(1+\kappa)^{1+\kappa}}) \geq  \ln(\tfrac{4^{\kappa}}{\exp(\kappa(1+\kappa))})>0,
	\end{align*}
	since $4^\kappa > \exp(\kappa + \kappa^2)$. This proves that $\frac{\partial  \Phi}{\partial \nu}(\tfrac{1}{2},\kappa,\nu)>c_1$ for some constant $c_1(\kappa) >0$. 
  
Finally,
   $$\Phi(\xi,p,\nu) \geq \Phi(\tfrac{1}{2},\kappa,\nu)
   = \int_0^\nu 	\frac{\partial  \Phi}{\partial \nu}(\tfrac{1}{2},\kappa,\zeta) \, \mathrm{d} \zeta
   \geq \int_0^\nu c_1 \, \mathrm{d} \zeta 
   = c_1 \nu .\qedhere $$
\end{proof}

\end{document}